\DeclareMathAlphabet{\mathpzc}{OT1}{pzc}{m}{it}
\newtheorem{theorem}{Theorem}[section]
\newtheorem{lemma}[theorem]{Lemma}
\newtheorem{claim}[theorem]{Claim}
\newtheorem{corollary}[theorem]{Corollary} 
\theoremstyle{definition}
\newtheorem{definition}[theorem]{Definition}
\newtheorem{notation}[theorem]{Notation} 
\theoremstyle{remark}
\newtheorem{remark}[theorem]{Remark}
\newtheorem{obs}{Observation}[section]
\newtheorem{convention}{Convention}[section]
\newtheorem{proposition}{Proposition}
\newcommand{\M}{\mathcal{M}}
\newcommand{\I}{\mathcal{I}}
\newcommand{\J}{\mathcal{J}}
\newcommand{\In}{\mathbf{I}}
\newcommand{\Jn}{\mathbf{J}}
\newcommand{\Tn}{\mathbf{T}}
\newcommand{\Un}{\mathbf{U}}
\newcommand{\Vn}{\mathbf{V}}
\newcommand{\Wn}{\mathbf{W}}
\newcommand{\Mm}{\mathbb{M}}
\newcommand{\N}{\mathcal{N}}
\newcommand{\K}{\mathcal{K}}
\newcommand{\Uu}{\mathcal{U}}
\newcommand{\Ll}{\mathcal{L}}
\newcommand{\Ss}{\mathcal{S}}
\newcommand{\la}{\left <}
\newcommand{\ra}{\right >}
\newcommand{\uphp}{\upharpoonright}
\newcommand{\ov}{\overline}
\newcommand{\ar}{\rightarrow}
\newcommand{\qt}{\textrm{qftp}}
\newcommand{\ms}{\vspace{.05in}}
\newcommand{\bs}{\vspace{.1in}}
\newcommand{\lx}{<_{\textrm{lex}}}
\newcommand{\nin}{\noindent}
\newcommand{\cl}{\overline{\textrm{cl}}}
\newcommand{\W}{\omega^{<\omega}}
  \title{Indiscernibles, EM-types, and Ramsey Classes of Trees}
\author{Lynn Scow}
\begin{document}

\maketitle

\begin{abstract}
 It was shown in \cite{sc12}  that for a certain class of structures $\I$, $\I$-indexed indiscernible sets have the modeling property just in case the age of $\I$ is a Ramsey class. We expand this known class of structures from ordered structures in a finite relational language to ordered, locally finite structures which isolate quantifier-free types by way of quantifier-free formulas.  As a corollary, we obtain a new Ramsey class of finite trees.
\end{abstract}

\footnotemark{It was recently brought to my attention that the result in Corollary \ref{314} appears in \cite{le73} and was later surveyed in \cite{garo74}.  Subsequent proofs have been found by M. Sokic in \cite{so13} and by S. Solecki.

\section{Introduction}\label{1}

A generalized indiscernible set (which we will abbreviate as an \emph{indiscernible}) is a set of tuples from a model $\M$, $(a_i : i \in \I)$, indexed by a structure $\I$ in a homogeneous way: the complete type of a finite tuple of parameters $(a_{i_1}, \ldots, a_{i_n})$ in $\M$ is fully determined by the quantifier-free type of the indices $(i_1, \ldots, i_n)$ in $\I$.  If $\I$ is known, we call the indiscernible an \emph{$\I$-indexed indiscernible set.}  Generalized indiscernible sets were originally developed in \cite{sh90} and have been used in many places: \cite{bash12,lassh03,dzsh04,gu12,kim11,tats12}.  In \cite{dzsh04}, indiscernibles indexed by trees were studied, and a specific property was proved of them.  One of the main goals of \cite{sc12} was to consider this specific property generalized from a tree to an arbitrary structure, $\I$, named the \emph{modeling property (for $\I$-indexed indiscernibles)}, and relate this property to a combinatorial property of the age of $\I$.  The appropriate notion turned out to be the one of \emph{Ramsey class} (see Definition \ref{44}.)  A ``dictionary'' theorem was proved: if $\I$ is a structure in a finite relational language, linearly ordered by one of its relations, then the age of $\I$  is a Ramsey class just in case $\I$-indexed indiscernible sets have the modeling property (see Definition \ref{32}.)  It was conjectured that results might travel both ways through this dictionary: known Ramsey classes would yield new structures to index indiscernibles; known results on indiscernibles would yield new Ramsey classes.  In fact this is the case.
In Theorem \ref{37}, we extend this dictionary to the case where $\I$ is locally finite, linearly ordered by one of its relations, and has a certain technical property, \texttt{qfi}: quantifier-free types realized in $\I$ are isolated by quantifier-free formulas.  This generalizes the dictionary theorem to certain situations where we have an infinite language containing function symbols, in particular to the case where $\I$ is ordered and locally finite in a finite language. The \textit{locally finite-linearly ordered-\texttt{qfi}} case encompasses two indexing structures $\I$ from the literature, $I_0 = (\W,\unlhd,\wedge,\lx)$ and $I_s = (\W,\unlhd,\wedge,\lx,(P_n)_{n<\omega})$, where $\unlhd$, $\wedge$, $\lx$, $P_n$ are interpreted as the partial tree-order, the meet function in this order, the lexicographic order on sequences, and the $n$-th level of the tree, respectively.  It is known from \cite{kks11,tats12} that both of these structures index indiscernibles with the modeling property. Corollaries \ref{314} and \ref{315} conclude that the ages of $I_0$, $I_s$, respectively, form Ramsey classes.  The former constitutes an alternative proof of a known result (see \cite{nvt10,fou99}); the latter introduces a new example of a Ramsey class of finite trees.

In Section \ref{2} we give the basic lemmas around \texttt{qfi} and further develop a notion of EM-type used in \cite{kks11}.  In the process, we give restatements of certain definitions from \cite{sc12} in Definitions \ref{201}, \ref{31}, and \ref{555} that drop reference to a linear order on $\I$.  The technology of EM-types primarily addresses the question, ``what uniform definable character of an initial, indexed set of parameters may be preserved in an indiscernible indexed by the same set?''  In the technology developed in this section, there is no use of a linear order on the index structure, $\I$.  Though indiscernibles indexed by unordered $\I$ do not exist in \emph{all} structures $M$, (\cite{sc12}) the technical lemmas of this section are still of some independent interest for studying unordered indiscernibles in a limited setting. 

In Section \ref{3} we prove the main theorem, Theorem \ref{37}, that in the more general case of \textit{locally finite-linearly ordered-\texttt{qfi}}, $I$-indexed indiscernibles have the modeling property just in case $\textrm{age}(I)$ is a Ramsey class.  From this theorem we deduce the new partition result, Corollary \ref{314}, that $\textrm{age}(I_0)$ is a Ramsey class.

In Section \ref{4} we provide an alternate proof of the result that $I_0$-indexed indiscernibles have the modeling property (from \cite{tats12}) using only a result of \cite{fou99}, Theorem \ref{37}, and the technology of EM-types.  The arguments in Theorem \ref{312} are finitary and can be adapted to a direct proof of Corollary \ref{314}, modulo a few applications of compactness.

The author thanks Dana Barto\v{s}ov\'{a}, Christian Rosendal and Stevo Todor\v{c}evi\'{c} for helpful conversations and for suggesting crucial references.  The particular proof written for Prop.~\ref{qfi}(2) is due to a very helpful conversation with John Baldwin, Fred Drueck, and Chris Laskowski.  The author thanks the reviewer for the detailed reading and many helpful comments and suggestions.

\subsection{conventions}\label{11} Much of our model-theoretic notation is standard, see \cite{ho93,ma02} for references.  For $t \in \{0,1\}$, by $\varphi^t$ we mean $\varphi$ if $t=0$, and $\neg \varphi$, if $t =1$. For an $L'$-structure $\I$ and a sublanguage $L^\ast \subseteq L'$, by $\I | L^\ast$ we mean the reduct of $\I$ to $L^\ast$. By $\textrm{qftp}^{L'}(i_1, \ldots, i_{n}; \I)$ we mean the complete quantifier-free $L'$-type of $(i_1, \ldots, i_n)$ in $\I$ (if $L'$ is clear, it is omitted.) The complete quantifier-free type of a substructure of $\I$ is the complete quantifier-free type of a tuple that enumerates some substructure of $\I$. By Diag$(\N)$, we mean the atomic diagram of $\N$.  By age($\I$) we mean the class of all finitely-generated substructures of $\I$ closed under isomorphisms.  In this paper, a complete quantifier-free type is always a type in a finite list of variables.

For a tuple $\ov{a} =  ( a_1, \ldots, a_{m} )$ and a subsequence $\sigma = \la i_1, \ldots, i_{k} \ra$ of $\la 1, \ldots, m \ra$, by $\ov{a} \uphp \sigma$ we mean $( a_{i_1},\ldots, a_{i_{k}} )$. For a  subset $Y \subseteq I$, and a type $\Gamma(\{x_i : i \in I \})$, by $\Gamma |_{\{x_i : i \in Y\}}$ we mean the restriction of $q$ to formulas containing variables in $\{x_i : i \in Y\}$.  If a tuple $\ov{a}$ satisfies a type $\Gamma(\ov{x})$ in a structure $\M$, we write $\ov{a} \vDash_{\M} \Gamma$, where $\M$ is omitted if it is the monster model (see Convention \ref{401}.)

We write $\ov{x}, \ov{a}, \ov{\imath}$ to denote finite tuples, and $\alpha, \beta$ to denote ordinals. The underlying set of a structure $\I$ is given by the unscripted letter, $I$.  For a sequence $\eta : = \la \eta_0, \ldots, \eta_{n-1} \ra$, we denote the length by $\ell(\eta) = n$.  Given a tuple $\ov{a} = (a_1,\ldots,a_n)$, by $(\ov{a})_i$  we mean $a_i$ and by $\bigcup \ov{a}$ we mean $\{ a_i : 1 \leq i \leq n \}$.  We often abbreviate expressions $(a_{i_1}, \ldots, a_{i_n})$ by $\ov{a}_{\ov{\imath}}$.

\section{Basic notions}\label{2}

The definition for $\I$-indexed indiscernible sets was first presented in \cite{sh90}.  We set our notation in the following:

\begin{definition}\label{201}(generalized indiscernible set) Fix an $L'$-structure $\I$ and an $L$-structure $\M$ for some languages $L$ and $L'$.  Let $a_i$ be same-length tuples of parameters from $M$ indexed by the underlying set $I$ of $\I$.

\begin{enumerate}
\item We say that $(a_i : i \in I)$ is an $\I$-\emph{indexed indiscernible (set in $\M$)} if for all $n \geq 1$, for all sequences $i_1,\ldots,i_n$, $j_1,\ldots,j_n$ from $I$,

\vspace{.1in}

\hspace{-.1in} $\textrm{qftp}^{L'}(i_1, \ldots, i_n; \I) = \textrm{qftp}^{L'}(j_1, \ldots, j_n; \I) \Rightarrow$

\vspace{.1in}

\hspace{1.4in} $\textrm{tp}^L(a_{i_1}, \ldots, a_{i_n}; \M) = \textrm{tp}^L(a_{j_1}, \ldots, a_{j_n}; \M)$

\vspace{.1in}

\nin We omit $\M$ where it is clear from context.

\item In the case that the $L'$-structure $\I$ is clear from context, we say that the $\I$-indexed indiscernible $(a_i : i \in I)$ is \textit{$L'$-generalized indiscernible}.

\item Given a sublanguage $L^\ast \subseteq L'$, we say the $L'$-generalized indiscernible set $(a_i : i \in I)$ is \emph{$L^\ast$-generalized indiscernible} if it is an $\I | L^\ast$-indexed indiscernible. 

\item A \emph{generalized indiscernible (set)} is an $I$-indexed set $(a_i : i \in I)$ for some set $I$ that is an $\I$-indexed indiscernible for some choice of structure $\I$ on $I$.
\end{enumerate}

We will always assume that generalized indiscernible sets are \emph{nontrivial}, i.e. that whenever $i \neq j$, $a_i \neq a_j$.
\end{definition}

\begin{notation}  For convenience, $\I$ as in Definition \ref{201} is referred to as the \emph{index model} and $L'$ is the \emph{index language}; $\M$ is referred to as the \emph{target model} and $L$ is  the \emph{target language}.  In this paper, parameters $(a_i : i \in I)$ in $\M$ are always assumed to be tuples such that $\ell(a_i)=\ell(a_j)$ for all $i,j$, and without loss of generality we often assume $\ell(a_i) = 1$.
\end{notation}

\begin{convention}\label{401}
For our purposes, there is no loss in generality to assume we are working not just in a target model $\M$ but in a monster model $\Mm$ of $\textrm{Th}(\M)$. From now on we write $\vDash \varphi$ for $\vDash_{\Mm} \varphi$.  We will reserve $L$ for the language of this model.  Parameters with no identified location come from $\Mm$.
\end{convention}

We define certain technical restrictions on $\I$ that we make in this paper and follow with a proposition.

\begin{definition}\label{211} \hspace{.5in} \begin{enumerate}
\item Say that $\I$ has \emph{quantifier-free types equivalent to quantifier-free formulas (\texttt{qteqf})} if for every complete quantifier-free type $q(\ov{x})$ realized in $\I$, there is a quantifier-free formula $\theta(\ov{x})$ equivalent to $q$ in $\I$, i.e. such that $q(\I) = \theta(\I)$.
\item Say that $\I$ is \texttt{qfi} if, for any complete quantifier free type $q(\ov{x})$ realized in $\I$, there is a quantifier-free formula $\theta_q(\ov{x})$ such that Th($\I$)$_\forall \cup \theta_q(\ov{x}) \vdash q(\ov{x})$.
\end{enumerate}
\end{definition}

\begin{obs} If $\I$ realizes finitely many quantifier-free $n$ types, for each $n$, then it is clear that $\I$ is \texttt{qteqf}.  For example, if $\I$ is a uniformly locally finite $L'$-structure where $L'$ is a finite language, or more specifically, $\I$ is an $L'$-structure where $L'$ is a finite relational language, then $\I$ is \texttt{qteqf}.
\end{obs}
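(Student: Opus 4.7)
The statement has three assertions. The main one is: if $\I$ realizes only finitely many quantifier-free $n$-types for each $n$, then $\I$ is \texttt{qteqf}. The plan is a standard ``separating formulas" argument. Fix a complete quantifier-free type $q(\ov{x})$ realized in $\I$, where $\ell(\ov{x}) = n$. By hypothesis, the set of complete quantifier-free $n$-types realized in $\I$ is finite, say $\{q, q_2, \ldots, q_k\}$. Because each $q_i$ ($i \geq 2$) is a distinct complete quantifier-free type, there exists a quantifier-free formula $\varphi_i(\ov{x})$ with $\varphi_i \in q$ and $\neg \varphi_i \in q_i$. Then $\theta(\ov{x}) := \bigwedge_{i=2}^k \varphi_i(\ov{x})$ is a single quantifier-free formula such that $\theta \in q$ but $\theta \notin q_i$ for every $i \geq 2$. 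Since every realization of $\theta$ in $\I$ has some complete quantifier-free type from the list $\{q, q_2, \ldots, q_k\}$, and since only $q$ contains $\theta$, we conclude $\theta(\I) = q(\I)$, witnessing \texttt{qteqf}.

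For the two listed examples, the task reduces to verifying the finiteness-of-quantifier-free-types hypothesis. In both cases the key point is that a complete quantifier-free type realized by a tuple $\ov{\imath}$ in $\I$ is determined by the isomorphism type of the substructure $\la \ov{\imath} \ra$ that $\ov{\imath}$ generates, together with the map indicating which variable is sent to which generator. Thus it suffices to bound, for each $n$, both the size of $\la \ov{\imath} \ra$ and the number of possible $L'$-structures of that size.

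If $L'$ is finite and $\I$ is uniformly locally finite, then there is a function $f : \omega \to \omega$ with $|\la \ov{\imath} \ra| \leq f(n)$ for every $n$-tuple $\ov{\imath}$ from $I$. Since $L'$ is finite, there are only finitely many isomorphism types of $L'$-structures of size $\leq f(n)$, so only finitely many complete quantifier-free $n$-types are realized. If instead $L'$ is finite relational, then $\la \ov{\imath} \ra$ has underlying set $\bigcup \ov{\imath}$, of size at most $n$, and the number of $L'$-structures on a set of size $\leq n$ is finite because there are only finitely many atomic formulas in $n$ variables over a finite relational language; again we get finitely many realized quantifier-free $n$-types.

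No real obstacle is anticipated: the only mild subtlety is making clear why a quantifier-free type is determined by the generated substructure (so that local finiteness plus a finite language suffices), as opposed to being controlled merely by the tuple itself, which is why the relational case is handled as a separate, simpler subcase.
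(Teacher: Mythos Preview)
Your proof is correct. The paper itself offers no proof of this Observation---it is stated as evident and immediately followed by Proposition~\ref{qfi}---so there is nothing to compare your argument against; you have simply written out the standard separating-formulas argument and the routine counting that the paper leaves implicit.
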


\begin{proposition}\label{qfi}  \hspace{.5in} \begin{enumerate}
\item $\I$ is \texttt{qfi} just in case it has \texttt{qteqf}.
\item In the case that $\I$ is a structure in a finite language and is locally finite, then $\I$ is \texttt{qfi}.
\end{enumerate}
\end{proposition}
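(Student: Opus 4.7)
The plan for (1) is to translate directly between the entailment formulation of \texttt{qfi} and the equality-of-solution-sets formulation of \texttt{qteqf}. For \texttt{qteqf} $\Rightarrow$ \texttt{qfi}, given $\theta_q$ with $\theta_q(\I) = q(\I)$, each $\varphi \in q$ gives $\I \models \forall \ov{x}(\theta_q(\ov{x}) \to \varphi(\ov{x}))$, a universal sentence lying in $\textrm{Th}(\I)_\forall$, so $\textrm{Th}(\I)_\forall \cup \{\theta_q\} \vdash \varphi$; ranging over $\varphi \in q$ yields the desired entailment of $q$. For \texttt{qfi} $\Rightarrow$ \texttt{qteqf}, the witness $\theta_q$ must lie in $q$, for otherwise completeness of $q$ forces $\neg \theta_q \in q$, and then the hypothesized entailment collapses to $\textrm{Th}(\I)_\forall \vdash \neg \theta_q$, making $\theta_q(\I) = \emptyset \neq q(\I)$. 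Given $\theta_q \in q$, the inclusion $q(\I) \subseteq \theta_q(\I)$ is immediate, and any $\ov{b} \in \theta_q(\I)$ satisfies $\textrm{Th}(\I)_\forall$ trivially together with $\theta_q$, hence realizes $q$ by the entailment.

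For (2), by (1) it suffices to establish \texttt{qteqf}. Fix a complete quantifier-free type $q(\ov{x})$ realized in $\I$ by $\ov{a} = (a_1, \ldots, a_n)$. Local finiteness makes $A := \langle \ov{a} \rangle$ a finite substructure, so I can fix finitely many $L'$-terms $\tau_1, \ldots, \tau_m$ (with $\tau_j(\ov{x}) = x_j$ for $j \le n$) such that $\{\tau_j(\ov{a}) : 1 \le j \le m\} = A$. I define $\theta_q(\ov{x})$ as the finite conjunction of the following atomic or negated-atomic formulas: for each pair $j, j' \in \{1, \ldots, m\}$, the formula $\tau_j(\ov{x}) = \tau_{j'}(\ov{x})$ or its negation, according to the truth value in $\I$; for each relation symbol $R$ of arity $r$ in $L'$ and each tuple $(j_1, \ldots, j_r) \in \{1, \ldots, m\}^r$, the formula $R(\tau_{j_1}(\ov{x}), \ldots, \tau_{j_r}(\ov{x}))$ or its negation; and for each function symbol $g$ of arity $r$ in $L'$ and tuple $(j_1, \ldots, j_r)$, the identity $g(\tau_{j_1}(\ov{x}), \ldots, \tau_{j_r}(\ov{x})) = \tau_{j_\ast}(\ov{x})$, where $j_\ast$ is chosen so that the identity holds at $\ov{a}$ (possible because $A$ is closed under $g$). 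Finiteness of $L'$ and of $m$ makes this a finite quantifier-free formula, and $\ov{a} \models \theta_q$, giving $q(\I) \subseteq \theta_q(\I)$.

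For $\theta_q(\I) \subseteq q(\I)$, suppose $\ov{b} \models \theta_q$ in $\I$ and define $f \colon A \to \I$ by $\tau_j(\ov{a}) \mapsto \tau_j(\ov{b})$. The equality and inequality conjuncts of $\theta_q$ render $f$ well-defined and injective; the function-identity conjuncts force $\{\tau_j(\ov{b}) : 1 \le j \le m\}$ to be closed under every $L'$-function, hence equal to $\langle \ov{b} \rangle$; preservation of relations and non-relations is immediate from the relational conjuncts. Thus $f$ is an $L'$-isomorphism $A \to \langle \ov{b} \rangle$ carrying $\ov{a}$ to $\ov{b}$, so $\ov{b}$ realizes $q$. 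The main obstacle I anticipate is this bookkeeping --- encoding equalities among generated elements, function-closure identities, and relational data into a single finite quantifier-free conjunction, which is precisely where finiteness of $L'$ together with local finiteness of $\I$ is invoked essentially; the only other subtlety, in part (1), is ruling out a vacuous witness via completeness of $q$.
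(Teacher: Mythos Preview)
Your proof is correct and follows essentially the same line as the paper's. In part (2) you and the paper both encode the quantifier-free diagram of the generated finite substructure via terms; you build $\theta_q$ directly from a chosen realization $\ov{a}$ and verify the reverse inclusion by exhibiting an explicit isomorphism $\langle \ov{a}\rangle \to \langle \ov{b}\rangle$, whereas the paper first enumerates all finite $L'$-structures with their ``graph formulas'' $\Phi_{gr}^{\ov{d}_i}$ and then selects the appropriate one and substitutes terms --- but the underlying idea is identical.
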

\begin{proof} 1.  If $\I$ is \texttt{qfi}, clearly it has \texttt{qteqf} (note that $\theta_q \in q$).  Suppose that $\I$ has \texttt{qteqf}.  Fix a complete quantifier-free type $q(\ov{x})$ realized in $\I$ and say it is equivalent to the quantifier-free $\theta_q$ in $\I$. Then, for all $\psi_\alpha \in q$, $\I \vDash \forall \ov{x} (\theta_q(\ov{x}) \ar \psi_\alpha(\ov{x}) )$.  Thus, Th$(\I)_\forall \vdash \forall \ov{x} (\theta_q(\ov{x}) \ar \psi_\alpha(\ov{x}) )$ and so $\I$ is \texttt{qfi}.

2. This surprisingly helpful observation is surely folklore, but we provide a proof for completeness.  Fix $n$.  We will show that $\I$ has \texttt{qteqf}. By assumption, every $n$-tuple from $\I$ generates a finite substructure of $\I$ and $L'$ is finite.  Thus we may enumerate the finite $L'$-structures up to isomorphism type as $(D_i)_{i < \omega}$ ($\omega$ is not important here.)  Let $\ov{d}_i$ be an enumeration of $D_i$, for each $i$, say that $|D_i| = N(i)$.  For a particular $i$, let $\Phi_{gr}^{\ov{d}_i}$ be a formula in variables $(x_1, \ldots, x_{N(i)})$, satisfied by $\ov{d}_i$ in $\I$, that describes the extensions of the relation symbols on $\ov{d}_i$, the graphs of the function symbols on $\ov{d}_i$, and any equalities or inequalities between constant symbols and the $(\ov{d}_i)_j$.  Clearly such a formula exists as a finite conjunction of literals.

Now given a complete quantifier-free $n$-type realized in $\I$, $q(y_1,\ldots,y_n)$, there must be some $l<\omega$ and some $(x_{i_j} : j \leq n)$ for $i_j \leq N(i)$ such that $q(x_{i_1},\ldots,x_{i_n}) \cup \{\Phi_{gr}^{\ov{d}_l}\}$ is consistent.  But then there are terms $\tau_k = \tau_k(x_{i_1},\ldots,x_{i_n})$ such that 

$q(x_{i_1},\ldots,x_{i_n}) \cup \{\Phi_{gr}^{\ov{d}_l}\} \vdash (\tau_k = x_k)$  

\nin for all $1 \leq k \leq N(i)$.  Let $\sigma_k = \tau_k(x_{i_1},\ldots,x_{i_n};y_1, \ldots, y_n)$ and substitute $\sigma_k$ for $x_k$ in $\Phi_{gr}^{\ov{d}_l}$ to obtain

$\Phi_{gr}^{\ov{d}_l}(x_1,\ldots,x_{N(i)}; \sigma_1(\ov{y}), \ldots, \sigma_{N(i)}(\ov{y}))$.

\nin The latter is a quantifier-free formula equivalent to $q$ in $\I$.  By 1. we are done.
\end{proof}

\begin{remark}\label{402} Note that for a locally finite structure $\I$, $\I$ is \texttt{qfi} just in case for every complete quantifier-free type  $q(\ov{x})$ of a finite substructure of $\I$, there is a quantifier-free formula $\theta_q(\ov{x})$ such that Th($\I$)$_\forall \cup \theta_q(\ov{x}) \vdash q(\ov{x})$.
\end{remark}

The assumption made on index models $\I$ for $\I$-indexed indiscernible in \cite{sh90} is exactly that $\I$ has \texttt{qteqf} (equivalently, \texttt{qfi}.)  The statements of \texttt{qteqf} and \texttt{qfi} offer different perspectives on the same condition and we will use the terms interchangeably.

We define what it means for a generalized indiscernible to inherit the local structure of a set of parameters.  In this definition, the parameters and the indiscernible need not be indexed by the same structure, only by structures in the same language.  In \cite{zi88} the below notion is named \emph{lokal wie}.  The same notion is referred to as \emph{based on} in \cite{sc12, kks11, si12} .  Below we promote a synthesis of the two names:

\begin{definition}[locally based on]\label{31} Fix $\I, \J$ $L'$-structures and a sublanguage $L^\ast \subseteq L'$.  Fix a set of parameters $\In := (a_i : i \in I)$. 
\begin{enumerate}
\item We say the $J$-indexed set $(b_i : i \in J)$ 
is \emph{$L^\ast$-locally based on the $a_i$ ($L^\ast$-locally based on $\In$)} if for any finite set of $L$-formulas, $\Delta$, and for any finite tuple $(t_1, \ldots, t_n)$ from $J$, there exists a tuple $(s_1, \ldots, s_n)$ from $I$ such that 

$\qt^{L^\ast}(\ov{t};\J)=\qt^{L^\ast}(\ov{s};\I)$, 

\nin and

$\textrm{tp}^{\Delta}(\ov{b}_{\ov{t}}; \Mm) = \textrm{tp}^{\Delta}(\ov{a}_{\ov{s}}; \Mm)$.

\vspace{.1in}

\noindent We abbreviate this condition by ``the $b_i$ are $L^\ast$-locally based on the $a_i$.''

\item If the $J$-indexed set $(b_i : i \in J)$ is $L'$-locally based on the $a_i$, we omit mention of $L'$. 
\end{enumerate}
\end{definition}

\begin{obs}\label{38} It is easy to see that the property of one indexed set being \emph{locally based on} another is transitive.  Fix $L'$-structures $\I, \J, \J'$, and parameters $(a_i : i \in I)$, $(b_j : j \in J)$ and $\Wn := (c_k : k \in J')$. Then, if $\Wn$ is locally based on the $b_i$, and the $b_i$ are locally based on the $a_i$, we may conclude that $\Wn$ is locally based on the $a_i$.  In fact, we may further conclude that age($\J'$) $\subseteq$ age($\J$) $\subseteq$ age($\I$) by focusing attention on the complete quantifier-free types of substructures.  
\end{obs}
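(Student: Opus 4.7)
The plan is to verify each clause by a direct unwinding of Definition \ref{31}. For transitivity, fix a finite set $\Delta$ of $L$-formulas and a finite tuple $\ov{t} = (t_1, \ldots, t_n)$ from $J'$. Since $\Wn$ is locally based on the $b_j$, pick $\ov{u}$ from $J$ with $\qt^{L'}(\ov{t}; \J') = \qt^{L'}(\ov{u}; \J)$ and $\textrm{tp}^{\Delta}(\ov{c}_{\ov{t}}) = \textrm{tp}^{\Delta}(\ov{b}_{\ov{u}})$. Then apply the hypothesis that the $b_j$ are locally based on the $a_i$ to the tuple $\ov{u}$ with the same $\Delta$, producing $\ov{s}$ from $I$ with $\qt^{L'}(\ov{u}; \J) = \qt^{L'}(\ov{s}; \I)$ and $\textrm{tp}^{\Delta}(\ov{b}_{\ov{u}}) = \textrm{tp}^{\Delta}(\ov{a}_{\ov{s}})$. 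Chaining the two pairs of equalities yields exactly the clause required to conclude that $\Wn$ is locally based on the $a_i$.

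For $\textrm{age}(\J') \subseteq \textrm{age}(\J)$, let $D \in \textrm{age}(\J')$ be finitely generated inside $\J'$ by some tuple $\ov{t}$. The complete quantifier-free $L'$-type of $\ov{t}$ in $\J'$ encodes every term equation $\tau(\ov{x}) = \tau'(\ov{x})$ and every relational atomic statement $R(\tau(\ov{x}))$, so it pins down the isomorphism type of the substructure finitely generated by $\ov{t}$. Invoking the condition that $\Wn$ is locally based on the $b_j$, with any $\Delta$ (e.g.\ $\Delta = \emptyset$), produces $\ov{u}$ from $J$ such that $\qt^{L'}(\ov{t}; \J') = \qt^{L'}(\ov{u}; \J)$, so the substructure of $\J$ generated by $\ov{u}$ is isomorphic to $D$, whence $D \in \textrm{age}(\J)$. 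The inclusion $\textrm{age}(\J) \subseteq \textrm{age}(\I)$ is proved identically using the hypothesis that the $b_j$ are locally based on the $a_i$.

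No real obstacle arises; the argument is essentially bookkeeping with the definition. The one subtlety worth flagging is the fact that the complete quantifier-free type of a finite tuple $\ov{t}$ in an $L'$-structure determines the isomorphism type of the substructure it generates, which is immediate because term equalities and atomic relational formulas are themselves quantifier-free.
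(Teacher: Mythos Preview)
Your proposal is correct and follows exactly the approach the paper indicates: the paper does not give a detailed proof of this observation, merely noting it is easy and pointing to ``the complete quantifier-free types of substructures,'' which is precisely the mechanism you unpack. Your two-step chaining for transitivity and your use of the $\qt^{L'}$-matching clause (with, say, $\Delta = \emptyset$) to transfer isomorphism types of finitely generated substructures are the intended arguments.
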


\begin{definition}\label{226} Fix languages $L^\ast \subseteq L'$.  Given an $L'$-structure $\I$ and an $I$-indexed  set $\In:=(a_i: i \in I)$, define the \emph{($L^\ast$-)EM-type of $\In$} to be:
\begin{eqnarray} \textrm{EMtp}_{L^\ast}(\In)(x_i : i \in I) = \{ \psi(x_{i_1}, \ldots, x_{i_n}) :  \psi \textrm{~from~} L, i_1, \ldots, i_n \textrm{~from~} I, \textrm{and for~} \nonumber \\  \textrm{any~}
 (j_1, \ldots, j_n)  \textrm{~from~} I \textrm{~such that~} \textrm{qftp}^{L^\ast}(j_1,\ldots,j_n; \I)=\textrm{qftp}^{L^\ast}(i_1,\ldots,i_n; \I), \nonumber \\
\vDash \psi(a_{j_1}, \ldots, a_{j_n}) \} \nonumber
\end{eqnarray}

\nin If $L^\ast = L'$, we may omit mention of it.
\end{definition}

\begin{remark}
The specific case of the above definition for $\I$ a linear order is called an ``EM-type'' in \cite{tezi11}.  This notation is not to be confused with EM($I,\Phi$), which in \cite{ba09,sh90} refers to a certain kind of structure.  The relevant similarity is that $\Phi(x_i : i \in I)$ is \emph{proper} for $(\I,\textrm{Th}(\Mm))$ in the sense of \cite{ba09,sh90} if it is the set of formulas satisfied in $\Mm$ by an $\I$-indexed indiscernible.  By Prop.~(\ref{34}) \ref{992}., given an $L'$-structure $\I$, $L'$-EM-types indexed by $I$ may always be extended to a set $\Phi$ proper for $(\I,\textrm{Th}(\Mm))$, provided that $\I$-indexed indiscernible sets have the modeling property.
\end{remark}

The following notation for the type of an indiscernible follows \cite{ma02}.  In the classical case of order indiscernibles, where the index structure is a linear order of the form $(\mathbb{N}, <)$, there is a canonical orientation of the variables in any quantifier-free $n$-type (e.g. $q(x_1, \ldots, x_n)$ where $x_1 < \ldots < x_n$.)  Here we deal with an arbitrary structure $\I$ where there may not be such a canonical orientation, and so we define the type of an indiscernible to include all orientations of variables in all types.  From this perspective, the use of canonical orientations of variables is something of an aesthetic device for special cases.

\begin{definition}\label{555} Given an $\I$-indexed indiscernible set $\In := (a_i : i \in I)$, define:

\begin{enumerate}

\item for any complete quantifier-free type $\eta(v_1,\ldots,v_n)$ realized in $\I$:
\begin{eqnarray}
p^\eta(\In) = \{ \psi(x_1, \ldots, x_n) : \psi \textrm{~from~} L \textrm{~and there exists~} i_1 , \ldots , i_n \textrm{~from~} I   \textrm{~such that~} \nonumber \\  (i_1, \ldots, i_n) \vDash_{\I} \eta 
\textrm{~and~} \vDash \psi(a_{i_1}  \ldots, a_{i_n})  \} \nonumber
\end{eqnarray}
	\item  tp$(\In)$ := $\la p^\eta(\In) : n < \omega, \eta \textrm{~is a~} \textrm{complete quantifier-free~} \right.$ 

\hspace{2.3in} $\left. n\textrm{-type realized in~} \I \ra$
\end{enumerate}
\end{definition}

\begin{obs}\label{400}
Let $\eta(v_1,\ldots,v_n)$ be the complete quantifier free type of a finite substructure of $\I$ in some enumeration.  Suppose there is a permutation $\tau$ of $\{1, \ldots, n\}$ such that realizations of $\eta(v_1, \ldots, v_n)$, $\eta_\tau := \eta(v_{\tau(1)}, \ldots, v_{\tau(n)})$ are isomorphic as tuples.  If $\In$ is an $\I$-indexed indiscernible set, then the following information will be contained in tp($\In$): $\psi(x_1,\ldots,x_n) \in p^\eta(\In) \Leftrightarrow \psi(x_1,\ldots,x_n) \in p^{\eta_\tau}(\In)$.
\end{obs}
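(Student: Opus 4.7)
My plan is to show $p^\eta(\In) = p^{\eta_\tau}(\In)$ by a direct unpacking of Definition~\ref{555}. The central conceptual move is to translate the hypothesis on $\tau$ into the concrete statement that $\eta$ and $\eta_\tau$ cut out the same subset of $I^n$.

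First I would parse ``realizations of $\eta(v_1, \ldots, v_n)$ and of $\eta_\tau := \eta(v_{\tau(1)}, \ldots, v_{\tau(n)})$ are isomorphic as tuples'' as follows: if $\ov{i} = (i_1, \ldots, i_n) \vDash_{\I} \eta$, then the tuples $(i_1, \ldots, i_n)$ and $(i_{\tau(1)}, \ldots, i_{\tau(n)})$ share a complete quantifier-free type in $\I$. Since both are complete qftps and $(i_1, \ldots, i_n)$ realizes $\eta$, the tuple $(i_{\tau(1)}, \ldots, i_{\tau(n)})$ must also realize $\eta$, which by substitution reads $\ov{i} \vDash \eta_\tau$. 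The reverse containment of realization sets follows either by noting that ``isomorphic as tuples'' is a symmetric relation, or by iterating closure under $\tau$ and using that $\tau$ has finite order as a permutation of $\{1, \ldots, n\}$, so that $\tau^{-1} = \tau^{m-1}$ for some $m$. Either way, $\eta$ and $\eta_\tau$ have the same realization set in $\I$.

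Next I would feed this into Definition~\ref{555}(1): $\psi(x_1, \ldots, x_n) \in p^\eta(\In)$ iff there exist $i_1, \ldots, i_n \in I$ with $(i_1, \ldots, i_n) \vDash_{\I} \eta$ and $\vDash \psi(a_{i_1}, \ldots, a_{i_n})$, and the analogous statement holds with $\eta_\tau$ in place of $\eta$. Since the ``$\exists \ov{i}$'' clauses in the two statements range over the same subset of $I^n$, they are logically equivalent, which gives $p^\eta(\In) = p^{\eta_\tau}(\In)$. Because $\textrm{tp}(\In)$ is defined as the sequence of $p^\eta(\In)$ as $\eta$ varies over complete qftps realized in $\I$, this equivalence is recorded inside $\textrm{tp}(\In)$ as claimed.

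I do not anticipate a real obstacle. The indiscernibility of $\In$ is not actually invoked to obtain the equivalence itself; it enters only through Definition~\ref{555}, guaranteeing that each $p^\eta(\In)$ is a complete type rather than a partial one. The observation is essentially a redundancy remark about how $\textrm{tp}(\In)$ is indexed: whenever a permutation of variables stabilizes a complete qftp $\eta$, the corresponding $p$-type appears under two labels $\eta$ and $\eta_\tau$. The only point requiring care is parsing ``isomorphic as tuples'' correctly; once this is translated into the equality of the realization sets of $\eta$ and $\eta_\tau$ in $\I$, the remainder of the argument is formal.
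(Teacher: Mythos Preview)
Your proposal is correct. The paper states Observation~\ref{400} without proof, so there is no argument to compare against; your unpacking is exactly the routine verification the author leaves implicit. Your key parsing step---that ``realizations of $\eta$ and $\eta_\tau$ are isomorphic as tuples'' amounts to $\eta$ and $\eta_\tau$ having the same realization set in $\I$ (equivalently, $\eta = \eta_\tau$ as complete quantifier-free types)---is the right reading, and once that is in hand the equality $p^\eta(\In) = p^{\eta_\tau}(\In)$ is immediate from Definition~\ref{555}(1), as you say. Your remark that indiscernibility is not needed for the biconditional itself is also accurate; the observation is really about the built-in redundancy in the indexing of $\textrm{tp}(\In)$, and this is how the paper later uses it (see the Remark preceding Definition~\ref{35}, where nontrivial automorphisms of finite substructures of $\I$ produce exactly such permutations $\tau$).
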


\begin{remark} The set $p^{\eta}(\In)$ does not seem terribly useful for a set of parameters $\In$ if $\In$ is not generalized indiscernible, as $p^{\eta}(\In)$ may not be a consistent type.  $\textrm{EMtp}_{L'}(\In)$ is always a consistent type, though may be trivial.
\end{remark}

The following definitions are for Prop \ref{34}.

\begin{definition}\label{33} Fix an $L'$-structure $\I$ and a language $\Ll$.  We define $\textbf{Ind}(\I,\Ll)$ to be
\begin{eqnarray}
\textbf{Ind}(\I,\Ll)(x_i : i \in I) := \{\varphi(x_{i_1},\ldots,x_{i_n}) \ar \varphi(x_{j_1},\ldots,x_{j_n}) : n<\omega, \ov{\imath},\ov{\jmath} \textrm{~from~} I, \nonumber \\ \qt^{L'}(\ov{\imath};\I)=\qt^{L'}(\ov{\jmath};\I), 
\varphi(x_1,\ldots,x_n) \in \Ll\} \nonumber
\end{eqnarray}
\end{definition}

\begin{definition} Let $\Gamma(x_i : i \in I)$ be an $L$-type and $\Uu = (a_i : i \in I)$ an $I$-indexed set of parameters in $\Mm$.  We say that \emph{$\Gamma$ is finitely satisfiable in $\Uu$} if for every finite $I_0 \subseteq I$, there is a $J_0 \subseteq I$, a bijection $f: I_0 \ar J_0$ and an enumeration $\ov{\imath}$ of $I_0$ such that $\qt^{L'}(\ov{\imath}; \I) = \qt^{L'}(f(\ov{\imath});\I)$ and
 $(a_{f(i)} : i \in I_0)$ $\vDash$ $\Gamma|_{\{x_i : i \in I_0\}}$
\end{definition}

\begin{obs}\label{228} If $\I$ and $\J$ are $L'$-structures with the same age, then they realize the same complete quantifier-free types: Suppose $\ov{\imath}$ from $\I$ realizes complete quantifier-free type $\eta(v_1,\ldots,v_n)$.  Since $\I$ and $\J$ have the same age, the substructure of $\I$ generated by $\ov{\imath}$ is isomorphic to some substructure of $\J$.  An isomorphism taking one substructure to the other takes $\ov{\imath}$ to a tuple $\ov{\jmath}$ from $\J$ satisfying the same complete quantifier-free type.
\end{obs}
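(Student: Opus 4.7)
The plan is to unpack the definition of age and use the standard fact that isomorphisms of $L'$-structures preserve all quantifier-free $L'$-formulas. Let $q(\ov{x})$ be a complete quantifier-free $L'$-type realized in $\I$, say by a finite tuple $\ov{\imath}$ (recall the convention that all complete quantifier-free types here are in finitely many variables). Form the substructure $\A$ of $\I$ generated by the coordinates of $\ov{\imath}$; by construction $\A$ is finitely generated, so $\A \in \mathrm{age}(\I) = \mathrm{age}(\J)$, which yields a finitely generated substructure $\B \leq \J$ and an $L'$-isomorphism $f : \A \to \B$.

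Setting $\ov{\jmath} := f(\ov{\imath})$, we obtain a tuple from $\J$. Because $f$ is an $L'$-isomorphism it preserves all quantifier-free $L'$-formulas, so for every quantifier-free $\varphi(\ov{x}) \in L'$ we have $\I \vDash \varphi(\ov{\imath})$ iff $\B \vDash \varphi(\ov{\jmath})$ iff $\J \vDash \varphi(\ov{\jmath})$ (quantifier-free formulas are absolute between a structure and its substructures). Hence $\mathrm{qftp}^{L'}(\ov{\jmath};\J) = q$, so $q$ is realized in $\J$. Swapping the roles of $\I$ and $\J$ gives the reverse inclusion.

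I do not expect a real obstacle; the content is essentially bookkeeping. The one small point to keep straight is that the isomorphism supplied by the definition of age is an isomorphism of abstract $L'$-structures $\A \cong \B$, not an isomorphism pinned to a particular enumeration, so I need to \emph{transport} the enumeration $\ov{\imath}$ along $f$ to obtain a witnessing tuple $\ov{\jmath}$ in $\J$ rather than hoping one is given. Once this is noted, the argument is just the standard preservation of quantifier-free types under substructure isomorphism.
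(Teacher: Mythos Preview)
Your proposal is correct and follows exactly the same route as the paper's own argument: form the finitely generated substructure on $\ov{\imath}$, invoke equality of ages to obtain an isomorphic substructure of $\J$, and push $\ov{\imath}$ across the isomorphism to get a realizing tuple $\ov{\jmath}$. Your added remarks about absoluteness of quantifier-free formulas and about transporting the enumeration along $f$ just make explicit what the paper leaves implicit.
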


In the next proposition we detail how two sets of parameters indexed by $L'$-structures may interact by way of EM-type, tp, and the property of being \emph{locally  based on}.  These sets of parameters are indexed by sets $I, J$, and the parameters may or may not be indiscernible according to the intended structures $\I, \J$ on $I, J$. The following  table illustrates the roles of the different bold-face letters:

\vspace{.1in}

\begin{tabular}{|c|c|c|}\hline
indexing set	& $\I/\J\textrm{-indexed indiscernible set}$ & $I/J\textrm{-indexed set}$ \\\hline
$I$			& $\In = (c_i)_i, \Wn = (d_i)_i$					& $\Un = (a_i)_i, \Vn$ \\\hline
$J$			& $\Jn = (b_i)_i$						& $\Tn = (e_i)_i$ \\\hline
\end{tabular}

\vspace{.1in}

\begin{proposition}\label{34} Fix an $L'$-structure $\I$, any $I$-indexed set of parameters 

\noindent $\Un = (a_i : i \in I)$ (possibly indiscernible), and an $\I$-indexed indiscernible set $\In = (c_i : i \in I)$. Let $\J$ be an $L'$-structure with the same age as $\I$ and let $\Jn:= (b_i : i \in J)$ be any $\J$-indexed indiscernible set.  Assume $\I \subseteq \J$ is a substructure in items \ref{992}., \ref{986}., \ref{988}.

\begin{enumerate}
\item\label{991} For any complete quantifier-free type $\eta$ realized in $\J$, if $p^\eta(\In) \subseteq p^\eta(\Jn)$, then $p^\eta(\Jn) \subseteq p^\eta(\In)$
\item\label{993}[two sets of indiscernibles]  $\Jn$ is locally based on the $c_i$ just in case tp$(\In)$=tp$(\Jn)$.
\item\label{992}[two sets of parameters] A $J$-indexed set of parameters $\Tn = (e_i : i \in J)$ is locally based on the $a_i$ just in case EMtp$_{L'}(\Tn) \supseteq$ EMtp$_{L'}(\Un)$.
\item\label{981} For an $I$-indexed set of parameters $\Vn$, $\Vn \vDash$ EMtp$_{L'}(\Un)$ if and only if EMtp$_{L'}(\Vn)$ $\supseteq$ EMtp$_{L'}(\Un)$.
\item\label{982} For an $\I$-indexed indiscernible set $\Wn:=(d_i : i \in I)$, tp$(\Wn)$=tp$(\In)$ just in case $\Wn \vDash$ EMtp$_{L'}(\In)$, just in case EMtp$_{L'}(\Wn)$=EMtp$_{L'}(\In)$.
\item\label{984} If \textbf{Ind}($\I$,$L$) is finitely satisfiable in $\Un$, then there is an $\I$-indexed indiscernible $\Wn:= (d_i : i \in I)$ locally based on the $a_i$.
\item\label{986}  There is a $J$-indexed set of parameters $\Tn =(e_i : i \in J)$ such that EMtp$_{L'}(\Un)$ $\subseteq$ EMtp$_{L'}(\Tn)$.
\item\label{988}   Suppose $\Tn$ is any $J$-indexed set of parameters, and $L^\ast \subseteq L'$.  If EMtp$_{L'}(\Un) \subseteq$ EMtp$_{L'}(\Tn)$, then  EMtp$_{L^\ast}(\Un) \subseteq$ EMtp$_{L^\ast}(\Tn)$.
\end{enumerate}
\end{proposition}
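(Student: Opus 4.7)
The plan is to unwind the definitions and exploit the fact that a single $L^\ast$-qftp may be refined by many $L'$-qftps, together with the standing assumption $\textrm{age}(\I) = \textrm{age}(\J)$. Note that $\textrm{EMtp}_{L^\ast}(\cdot) \subseteq \textrm{EMtp}_{L'}(\cdot)$ is immediate from Definition \ref{226}, since agreement of $L'$-qftps entails agreement of $L^\ast$-qftps; so the hypothesis yields $\textrm{EMtp}_{L^\ast}(\Un) \subseteq \textrm{EMtp}_{L'}(\Tn)$ for free, and the substance is to sharpen the right-hand side to $\textrm{EMtp}_{L^\ast}(\Tn)$.

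First, I would fix $\psi(x_{i_1},\ldots,x_{i_n}) \in \textrm{EMtp}_{L^\ast}(\Un)$ with $\ov\imath := (i_1,\ldots,i_n)$ from $I$, and to verify $\psi(x_{\ov\imath}) \in \textrm{EMtp}_{L^\ast}(\Tn)$, I would pick an arbitrary $\ov\jmath$ from $J$ with $\qt^{L^\ast}(\ov\jmath;\J) = \qt^{L^\ast}(\ov\imath;\J)$; note that $\qt^{L^\ast}(\ov\imath;\J) = \qt^{L^\ast}(\ov\imath;\I)$ because $\I \subseteq \J$ preserves quantifier-free formulas. The goal is then to show $\vDash \psi(\ov e_{\ov\jmath})$.

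The main step is to produce a witness in $\I$ for $\ov\jmath$ and transfer the conclusion back to $\J$. Let $\rho := \qt^{L'}(\ov\jmath;\J)$; by Observation \ref{228} and the age equality, some tuple $\ov k$ from $I$ realizes $\rho$ in $\I$. Any $\ov m$ from $I$ with $\qt^{L'}(\ov m;\I) = \rho$ then satisfies $\qt^{L^\ast}(\ov m;\I) = \rho | L^\ast = \qt^{L^\ast}(\ov\imath;\I)$, so from $\psi(x_{\ov\imath}) \in \textrm{EMtp}_{L^\ast}(\Un)$ we obtain $\vDash \psi(\ov a_{\ov m})$; hence $\psi(x_{\ov k}) \in \textrm{EMtp}_{L'}(\Un)$. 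The hypothesis then gives $\psi(x_{\ov k}) \in \textrm{EMtp}_{L'}(\Tn)$, and since $\qt^{L'}(\ov\jmath;\J) = \rho = \qt^{L'}(\ov k;\I) = \qt^{L'}(\ov k;\J)$ (using $\I \subseteq \J$ once more), this delivers $\vDash \psi(\ov e_{\ov\jmath})$ as desired.

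I expect the main obstacle to be only careful bookkeeping: distinguishing qftps computed in $\I$ versus $\J$, applying $\I \subseteq \J$ together with $\textrm{age}(\I) = \textrm{age}(\J)$ at precisely the right moments, and keeping track of which conjunct of the EM-type definition is being checked. No machinery beyond Definition \ref{226} and Observation \ref{228} appears to be required.
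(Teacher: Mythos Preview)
Your argument for item~\ref{988} is correct and rests on exactly the idea the paper invokes: each complete quantifier-free $L^\ast$-type realized in $\I$ (equivalently in $\J$) is the union of the complete quantifier-free $L'$-types extending it. The paper compresses this into a single sentence (``This is clear, as a union of quantifier-free $L'$-types is equivalent to each quantifier-free $L^\ast$-type''), whereas you have carefully unwound where $\I \subseteq \J$ and $\textrm{age}(\I)=\textrm{age}(\J)$ (via Observation~\ref{228}) are actually used; the underlying route is the same.
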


\begin{proof} \begin{enumerate}
\item[\ref{991}.] Suppose $p^\eta(\In) \subseteq p^\eta(\Jn)$.  Let $\varphi(\ov{x}) \in p^\eta(\Jn)$.    Assume, for contradiction, there is no tuple from $I$ witnessing that $\varphi \in p^\eta(\In)$.  Then there is a tuple from $I$ that witnesses that $(\neg \varphi) \in p^\eta(\In)$,  by Obs.~\ref{228} and the fact that $\I$ and $\J$ have the same age. Since $\Jn$ is indiscernible and $\varphi(\ov{x}) \in p^\eta(\Jn)$, in fact for all $\ov{\jmath}$ from $J$ satisfying $\eta$, $\vDash \varphi(\ov{b}_{\ov{\jmath}} )$, and so it is not possible that $(\neg \varphi) \in p^\eta(\Jn)$, as our assumption would have us conclude.
\item[\ref{993}.] Suppose that $\Jn$ is locally based on the $c_i$.  Fix a complete quantifier-free type $\eta(\ov{v})$ realized in $\J$.  By \ref{991}., we need only show that $p^\eta(\In) \subseteq p^\eta(\Jn)$ to show that tp($\In$)=tp$(\Jn)$.  Suppose some tuple from $I$ witnesses that $\varphi(\ov{x}) \in p^\eta(\In)$.  Then by indiscernibility, every tuple $\ov{\imath}$ from $I$ satisfying $\eta$ is witness to $ \vDash \varphi(\ov{c}_{\ov{\imath}})$.  By the property of being \emph{locally based on}, it would be impossible for a tuple $\ov{\jmath}$ from $J$ satisfying $\eta(\ov{v})$ to have $ \vDash\neg \varphi(\ov{b}_{\ov{\jmath}})$.  Thus all tuples $\ov{\jmath}$ from $\J$ satisfying $\eta$ (and there is at least one) witness that $\varphi \in p^\eta(\Jn)$.

The other direction follows from the technique in \ref{992}. for representing $\Delta$-types as formulas.
\item[\ref{992}.] Suppose that $\Tn$ is locally based on the $a_i$ and fix $\varphi(x_{i_1},\ldots,x_{i_n}) \in$ EMtp$_{L'}(\Un$). Let $\ov{\imath} := (i_1,\ldots,i_n)$.  If $ \varphi(x_{i_1},\ldots,x_{i_n}) \notin$ EMtp$_{L'}(\Tn)$, then $ \vDash \neg \varphi(\ov{e}_{\ov{\jmath}})$ for some $\ov{\jmath}$ from $J$ satisfying the same quantifier-free type as $\ov{\imath}$.  By assumption, there exists $\ov{\imath}'$ from $I$ satisfying the same quantifier-free type as $\ov{\jmath}$ and $\vDash \neg \varphi(\ov{a}_{\ov{\imath}'} )$.  But the condition $\varphi(x_{i_1},\ldots,x_{i_n}) \in$ EMtp$_{L'}(\Un$), implies that such an $\ov{\imath}'$ cannot exist.

Suppose EMtp$_{L'}(\Tn)$ $\supseteq$ EMtp$_{L'}(\Un$).  Fix a finite $\Delta \subset L$ and any $\ov{e}_{\ov{\jmath}}$ from $\Tn$.  Let $\ov{\jmath} := (j_1,\ldots,j_n)$. For contradiction, suppose that: 
\begin{eqnarray}\label{eq5} \textrm{~no~} \ov{\imath} \textrm{~exists in~} I, \textrm{~with the same quantifier-free type as~} \ov{\jmath} \nonumber \\
\textrm{~and such that~}  \ov{a}_{\ov{\imath}} \equiv_\Delta \ov{e}_{\ov{\jmath}}
\end{eqnarray}
Let $\varphi$ be the conjunction of positive and negative instances of formulas from $\Delta$ satisfied by $\ov{e}_{\ov{\jmath}}$.  So $\vDash \varphi(\ov{e}_{\ov{\jmath}} )$. By Eq.~(\ref{eq5}), for arbitrary $\ov{\imath} = (i_1, \ldots, i_n)$ from $I$ with the same quantifier-free type as  $\ov{\jmath}$, $\vDash \neg \varphi(\ov{a}_{\ov{\imath}} )$. Thus $\neg \varphi(x_{i_1},\ldots,x_{i_n}) \in$ EMtp$_{L'}(\Un$) $\subseteq$ EMtp$_{L'}(\Tn)$.  But then since $\ov{\jmath}$ satisfies the same quantifier free type as $\ov{\imath}$,  $\vDash \neg \varphi(\ov{e}_{\ov{\jmath}} )$, contradiction.
\item[\ref{981}.] Clear.
\item[\ref{982}.] This follows because the indiscernibility assumption conflates the ``there exists'' condition in tp($\In$) with the ``for all'' condition in EMtp$_{L'}(\In$).
We use \ref{993}. and \ref{992}. to conclude that tp$(\Wn)$=tp$(\In)$ $\Leftrightarrow$ EMtp$_{L'}(\Wn)$ $\supseteq$ EMtp$_{L'}(\In)$.  However, the first condition is symmetric and $\Wn$, $\In$ are both $\I$-indexed indiscernible sets, so we may substitute EMtp$_{L'}(\Wn)$ $=$ EMtp$_{L'}(\In)$ for the second condition. To obtain the equivalence with ``$\Wn \vDash$ EMtp$_{L'}(\In)$'', use \ref{981}.
\item[\ref{984}.]  First observe that if $\Gamma(x_i:i\in I)$ is finitely satisfiable in $\Un$, then 
$\Gamma ~\cup$ EMtp$_{L'}(\Un$) is satisfiable.  So there exists $\Wn$ satisfying \textbf{Ind}($\I,L$) $\cup$ EMtp$_{L'}(\Un$). Thus $\Wn$ is generalized indiscernible and $\Wn \vDash$ EMtp$_{L'}(\Un$).  By \ref{992}. and \ref{981}., $\Wn$ is locally based on the $a_i$.
\item[\ref{986}.] We obtain $\Tn = (e_i : i \in J)$ as a realization of the type 
\begin{eqnarray}
\Gamma(x_j : j \in J) = \{ \varphi(x_{j_1},\ldots,x_{j_n}) : \ov{\jmath} \textrm{~from~} J \textrm{~such that for some~} \ov{\imath} \textrm{~from~} I \textrm{~with~} \nonumber \\
\qt^{L'}(\ov{\jmath};\J)=\qt^{L'}(\ov{\imath};\I), \varphi(x_{i_1},\ldots,x_{i_n}) \in \textrm{~EMtp}_{L'}(\Un) \} \nonumber
\end{eqnarray}
\nin But this type is clearly finitely satisfiable in $\Un$, as $\I$ and $\J$ have the same age.
\item[\ref{988}.] This is clear, as a union of quantifier-free $L'$-types is equivalent to each quantifier-free $L^\ast$-type.  
\end{enumerate}
\end{proof}

\ms

For an $L'$-structure $\I$, if $\I$-indexed indiscernibles have the modeling property, we may find $\J$-indexed indiscernibles locally based on an $\I$-indexed set of parameters, for any $L'$-structure $\J$ with $\textrm{age}(\J) \subseteq \textrm{age}(\I)$, as is observed in \cite{zi88,sc12} (equivalently, if every complete quantifier-free type realized in $\J$ is realized in $\I$.)  We prove a weaker result below, for clarity.  The term ``stretching''  is well-known terminology in the linear order case (see \cite{ho93,ba09}.) 

\begin{definition}\label{227} Fix $L'$-structures $\I$ and $\J$ such that age($\J$)$=$age($\I$).  Given an $\I$-indexed indiscernible $\In = (a_i : i \in I)$, we say a $\J$-indexed indiscernible $\Jn=(b_i : i \in J)$ is a \emph{stretching of $\In$ onto $\J$} if tp($\In$)=tp($\Jn$).
\end{definition}

\nin The lemma below is only a slight generalization of \citep[{chap.~VII, Lemma 2.2}]{sh90} in that the \texttt{qteqf} hypothesis is not needed.  

\begin{lemma}\label{229} For any $L'$-structures $\I$ and $\J$ such that age$(\J)$=age$(\I)$ and $\I$-indexed indiscernible $\In = (a_i : i \in I)$, there is a stretching of $\In$ onto $\J$.
\end{lemma}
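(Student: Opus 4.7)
The plan is to produce $\Jn$ as a realization of the same type $\Gamma$ constructed in the proof of Proposition \ref{34}(\ref{986}), and then observe that because the initial set $\In$ is already $\I$-indexed indiscernible, this realization is automatically $\J$-indexed indiscernible with the required property.

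Concretely, I would apply Proposition \ref{34}(\ref{986}) with $\Un := \In$ to obtain a $J$-indexed set $\Jn = (b_j : j \in J)$ realizing the type $\Gamma(y_j : j \in J)$ consisting of all formulas $\varphi(y_{j_1},\ldots,y_{j_n})$ for which there exists a tuple $\ov{\imath}$ from $I$ with $\qt^{L'}(\ov{\jmath};\J)=\qt^{L'}(\ov{\imath};\I)$ and $\varphi(x_{i_1},\ldots,x_{i_n}) \in$ EMtp$_{L'}(\In)$. The finite satisfiability of $\Gamma$ in $\In$ is immediate from the age hypothesis, exactly as in the proof of \ref{986}.

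The crucial extra observation is that the indiscernibility of $\In$ makes $\Gamma$ \emph{complete}. Unwinding Definition \ref{226}, the indiscernibility of $\In$ collapses the ``for all'' quantifier in EMtp$_{L'}(\In)$ to a single witness: $\varphi(x_{\ov{\imath}}) \in$ EMtp$_{L'}(\In)$ iff $\vDash \varphi(\ov{a}_{\ov{\imath}})$. Hence for each $\ov{\jmath}$ from $J$ and each $L$-formula $\varphi$, exactly one of $\varphi(y_{\ov{\jmath}})$, $\neg \varphi(y_{\ov{\jmath}})$ belongs to $\Gamma$ --- determined by whether $\vDash \varphi(\ov{a}_{\ov{\imath}})$ for any $\ov{\imath}$ from $I$ matching the qftp of $\ov{\jmath}$ (such $\ov{\imath}$ exists by the age hypothesis, and the choice is immaterial by indiscernibility of $\In$). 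Consequently any two tuples $\ov{\jmath}, \ov{\jmath}'$ from $J$ with the same $\qt^{L'}$ pick out exactly the same formulas from $\Gamma$, so $\Jn$ realizes the same $L$-type on these two tuples; that is, $\Jn$ is $\J$-indexed indiscernible.

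Finally, the containment EMtp$_{L'}(\In) \subseteq$ EMtp$_{L'}(\Jn)$ together with Proposition \ref{34}(\ref{992}) shows $\Jn$ is locally based on the parameters of $\In$; since both sets are now indiscernible in their respective index structures, Proposition \ref{34}(\ref{993}) then delivers tp$(\In) = $ tp$(\Jn)$, establishing $\Jn$ as the desired stretching of $\In$ onto $\J$. I anticipate no real obstacle beyond careful bookkeeping with quantifier-free types --- the key insight, that the witness supplied by \ref{986} inherits indiscernibility for free once the source is already indiscernible, is what allows the argument to avoid the \texttt{qteqf} hypothesis present in Shelah's original formulation.
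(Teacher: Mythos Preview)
Your approach is essentially the paper's: both arguments build the same type $\Gamma$ indexed by $J$ (yours via $\textrm{EMtp}_{L'}(\In)$, the paper's via the $p^\eta(\In)$, but since $\In$ is indiscernible these coincide), verify finite satisfiability from the age hypothesis, and check that any realization is a $\J$-indexed indiscernible with the correct type. One small bookkeeping point: Proposition~\ref{34}(\ref{986}) and (\ref{992}) are stated under the standing hypothesis $\I \subseteq \J$, which you do not have here, so the containment $\textrm{EMtp}_{L'}(\In) \subseteq \textrm{EMtp}_{L'}(\Jn)$ does not literally type-check (the variable sets differ); however, your own completeness-of-$\Gamma$ argument already shows directly that $\vDash \varphi(\ov{b}_{\ov{\jmath}}) \Leftrightarrow \vDash \varphi(\ov{a}_{\ov{\imath}})$ for any qftp-matching $\ov{\imath}$, which is precisely $p^\eta(\Jn)=p^\eta(\In)$ for each $\eta$, so the detour through (\ref{992}) and (\ref{993}) is unnecessary and you may simply drop it.
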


\begin{proof} Fix $\In = (a_i : i \in I)$, $\I$, $\J$ as above.  Define $\Gamma$ to be the type:
\begin{eqnarray}\Gamma(x_s : s \in J) := \{ \varphi(x_{s_1},\ldots,x_{s_n}) : (s_1,\ldots,s_n) \textrm{~from~} J, \eta(v_1,\ldots,v_n) \textrm{~is a} \nonumber \\ \textrm{complete quantifier-free~}  \textrm{type in~} \I, 
\textrm{qftp}(s_1,\ldots,s_n; \J)=\eta,  \nonumber \\ \textrm{~and~} \varphi(x_1,\ldots,x_n) \in p^\eta(\In)\} \nonumber 
\end{eqnarray}
\begin{claim}Any realization $\Jn = (b_i : i \in J)$ of $\Gamma$ will be a stretching of $\In$ onto $\J$.\end{claim} 

\begin{proof} Let $\Jn \vDash \Gamma$.   By Obs.~\ref{228}, $\I$ and $\J$ realize the same complete quantifier-free types.  By Prop.~\ref{34} \ref{991}., to see that tp$(\In)$=tp$(\Jn)$ holds we need only show that $p^{\eta}(\In) \subseteq p^{\eta}(\Jn)$, for an arbitrary complete quantifier-free type $\eta$ realized in $\J$.  Note that any formula $\varphi(\ov{x})$ in $p^{\eta}(\In)$ will automatically be in $p^{\eta}(\Jn)$, by definition of $\Gamma$.  A realization of $\Gamma$ is automatically $\J$-indexed indiscernible by the facts that tp$(\In)$=tp$(\Jn)$ and $\I, \J$ realize the same complete quantifier-free types.
\end{proof}
To see that $\Gamma$ is finitely satisfiable in $\Mm$, take a finite subset $\Gamma_0 \subset \Gamma$.  Let $\{j_k : k \leq N\}$ list all the members of $J$ mentioned in any formula in $\Gamma_0$.  Let $B$ be the substructure of $J$ generated by $\{j_k : k \leq N\}$.  By assumption, there is a substructure $A$ of $\I$ isomorphic to $B$, by some isomorphism $f: B \rightarrow A$.  Then $(f(j_k))_{k \leq N}$ has the same complete quantifier free type as $(j_k)_{k \leq n}$ and the tuple $(a_{f(j_k)} : k \leq N)$ works to satisfy $\Gamma_0(x_{j_0},\ldots,x_{j_{N}})$, by generalized indiscernibility of $\In$.
\end{proof}

\section{Modeling property and Ramsey classes}\label{3}

In applications one looks for $\I$-indexed indiscernibles to have the \emph{modeling property}, meaning that $\I$-indexed indiscernible sets can be produced in the monster model of any theory so as to inherit the local structure of an initial $I$-indexed set of parameters.

\begin{definition}\label{32}(modeling property) Fix an $L'$-structure $\I$.  We say that \emph{$\I$-indexed indiscernibles have the modeling property} if given any parameters $(a_i : i \in I)$ in the monster model of some theory, $\Mm$, there exists an $\I$-indexed indiscernible $(b_i : i \in I)$ in $\Mm$ locally based on the $a_i$.
\end{definition}

We repeat defintions for Ramsey classes given in \cite{kpt05,ne05}.  

\begin{definition}\label{30} Define an \emph{$A$-substructure of $C$} to be a substructure $A' \subseteq C$ isomorphic to $A$ where we do not reference a particular enumeration of $A'$. 

We refer to the set of $A$-substructures of $C$ as $C \choose A$.
\end{definition}

\begin{remark} We may think of an $A$-substructure of $C$ as the range of an embedding $e : A \ar C$.  If $A$ has no nontrivial automorphisms, then $A$-substructures may be identified with embeddings of $A$ in $C$.
\end{remark}

\begin{definition}  For an integer $k>0$, by a \emph{$k$-coloring of $C \choose A$} we mean a function $f: {C \choose A} \rightarrow \eta$, where $\eta$ is some set of size $k$ (typically $\eta := \{0, \ldots, k-1\}$.)
\end{definition}

\begin{definition} Fix a class $U$ of $L'$-structures, for some language $L'$.  Let $A, B, C$ be structures in $U$ and $k$ some positive  integer.  
\begin{enumerate}
\item By 
\begin{equation}
C \rightarrow (B)^A_k \nonumber
\end{equation}
we mean that for all $k$-colorings $f$ of $C \choose A$, there is a $B' \subseteq C$, where $B'$ is $L'$-isomorphic to $B$ and the restricted map, $f \uphp {B' \choose A}$, is constant.

\item If, for a particular coloring $f: { C \choose A} \rightarrow k$ we have a $B' \subseteq C$ such that $f \uphp {B' \choose A}$ is constant, we say that \textit{$B'$ is homogeneous for this coloring (homogeneous for $f$)}.
\end{enumerate}
\end{definition}

\begin{definition}\label{44} Let $\Uu$ be a class of finite $L'$-structures, for some language $L'$. $\Uu$ is a \emph{Ramsey class} if for any $A$, $B$ $\in$ $\Uu$ and positive integer $k$, there is a $C$ in $\Uu$ such that $C \rightarrow (B)^A_k$.
\end{definition}

\begin{remark} In the case where $L'$ contains a linear ordering, coloring substructures $A \subseteq C$ is equivalent to coloring embeddings of $A$ into $C$.
It is observed in \cite{ne05} that if we color embeddings, we can never find homogeneous $B \subseteq C$ containing $A$, if $A$ has a nontrivial automorphism and we color the embedded copies of $A$ in $C$ different colors.  If $\I$-indexed indiscernibles have the modeling property, then because of the case of $\M$ a linear order, there cannot exist a finite substructure $A \subset \I$ with a nontrivial automorphism (see Observation \ref{400}.)  The example of this phenomenon with $\I$ an unordered symmetric graph is worked out in \cite{sc12}.
\end{remark}

We want some additional notation for the function symbols case.  For the rest of this section we work with index structures $\I$ that are linearly ordered by some relation, $<$.  By \emph{increasing} we will always mean $<^{\I}$-increasing.

\begin{definition}\label{35} For $\I$ locally finite and linearly ordered by $<$, define $\cl(\cdot)$ on $I$ to take finite tuples $\ov{a}$ in increasing enumeration in $\I$ to the smallest substructure of $\I$ containing $\ov{a}$, also listed in increasing enumeration.
\end{definition}

\begin{remark} In Definition \ref{35}, $\cl(\ov{a})$ is a finite, increasing tuple in $\I$.
\end{remark}

\begin{obs}\label{36} Let $\I$ be as in Definition \ref{35}.
For a finite subset $A \subseteq I$, let $C(A):=\bigcup \cl(\ov{a})$, where $\ov{a}$ lists $A$ in increasing order.  Then $C(\cdot)$ defines a closure property on finite subsets $A, B \subseteq I$: i.e., $A \subseteq C(A)$, $C(C(A)) = C(A)$, and if $A \subseteq B$, then $C(A) \subseteq C(B)$.
\end{obs}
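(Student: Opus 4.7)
The plan is first to unpack the definition of $C(A)$ and identify it with a more familiar object, then reduce the three required properties to standard facts about substructure generation.

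First I would observe that, because $\I$ is linearly ordered by $<$, every finite subset $A \subseteq I$ has a unique increasing enumeration $\ov{a}$, so $C(A)$ does not depend on a choice. Moreover, unraveling Definition~\ref{35}, $\cl(\ov{a})$ is by construction an enumeration of the universe of the smallest substructure of $\I$ containing the entries of $\ov{a}$, and $\bigcup \cl(\ov{a})$ (in the notation of the introduction) is just the set of entries of that tuple. Hence $C(A)$ is exactly the underlying set of the substructure $\langle A\rangle_{\I}$ of $\I$ generated by $A$. Local finiteness of $\I$ ensures $C(A)$ is finite whenever $A$ is finite, so $C$ indeed maps finite sets to finite sets and the iteration $C(C(\cdot))$ stays in the intended domain.

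Once $C(A) = |\langle A\rangle_{\I}|$ is in hand, each of the three properties is a one-line verification from the universal property of substructure generation. For $A \subseteq C(A)$: every entry of $\ov{a}$ lies in $\cl(\ov{a})$, so $A \subseteq C(A)$. For $C(C(A)) = C(A)$: $C(A)$ is already the universe of a substructure of $\I$, so the smallest substructure of $\I$ containing $C(A)$ is that substructure itself, hence $C(C(A)) = C(A)$. For monotonicity: if $A \subseteq B$, then $\langle B\rangle_\I$ is a substructure of $\I$ containing $A$, and so contains the minimal such, $\langle A\rangle_\I$; taking universes, $C(A) \subseteq C(B)$.

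I do not expect any genuine obstacle here; the content is purely bookkeeping around the definition of generated substructure. The one subtlety worth flagging is that $\cl$ as stated is defined on increasing tuples rather than on sets, so one must be careful to pass from the set $A$ to its unique increasing enumeration (which needs the linear order $<$) and back via $\bigcup$; this is what the linearly ordered and locally finite hypotheses on $\I$ in Definition~\ref{35} are doing for us.
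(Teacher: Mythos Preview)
Your proof is correct. The paper states this observation without proof, treating the three closure properties as immediate once one recognizes (as you do) that $C(A)$ is the universe of the substructure of $\I$ generated by $A$; your write-up simply supplies the routine verification the paper omits.
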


\begin{remark} Our use of $\cl(\cdot)$ in the next theorem and also in Corollary \ref{412} is quite similar to the technique of the strong-subtree envelopes in \citep[6.2]{to10}. 
\end{remark}

The next theorem uses some additional notation.

\begin{definition} Fix a structure $\I$ linearly ordered by a relation $<$.  Fix a finite tuple $\ov{b}$ from $I$ and a finite subset $A \subseteq I$.
\begin{enumerate}
\item By $p_{\ov{b}}(\ov{x})$ we mean the complete quantifier-free type of $\ov{b}$ in $\I$.
\item By $p_A(\ov{x})$ we mean $p_{\ov{a}}(\ov{x})$, where $\ov{a}$ is $A$ listed in increasing enumeration.
\item We say that $\ov{b}$ is an \emph{increasing copy of $A$} if the substructure $B$ of $\I$ on $\bigcup \ov{b}$ is isomorphic to $A$.
\item Fix a finite tuple $\ov{i}$ from $A$ (i.e. $\bigcup \ov{\imath} \subseteq A$) and let $\ov{a}$ list $A$ in $<^{\I}$-increasing order.  We say that \emph{$\ov{i}$ isolates $\tau$ in $A$} if $\ov{a} \uphp \tau = \ov{i}$.
\end{enumerate}
\end{definition}

We give the main theorem.

\begin{theorem}\label{37} Suppose that $\I$ is a \texttt{qfi}, locally finite structure in a language $L'$ with a relation $<$ linearly ordering $I$.  
Then $\I$-indexed indiscernible sets have the modeling property just in case age$(\I)$ is a Ramsey class.
\end{theorem}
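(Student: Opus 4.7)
The plan is to prove the two implications separately, using Proposition~\ref{34} as the bridge between combinatorial and model-theoretic data.

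\textbf{($\Leftarrow$) Ramsey implies modeling.} Given parameters $\Un = (a_i : i \in I)$ in $\Mm$, by Prop.~\ref{34}(\ref{984}) it suffices to show that $\textbf{Ind}(\I,L)$ is finitely satisfiable in $\Un$. A finite piece involves a finite $\Delta \subset L$ and a finite $I_0 \subseteq I$. Let $B$ be the (finite, by local finiteness and Obs.~\ref{36}) substructure of $\I$ on $C(I_0)$, and list up to isomorphism the substructure types $A_1,\ldots,A_r$ occurring in $B$. For each $A_s$, equipped with its rigid increasing enumeration, I would define a coloring of $\binom{X}{A_s}$ by the $\Delta$-type of the tuple of $a$'s at the image of that enumeration; this uses only finitely many colors since $\Delta$ is finite. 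An iterated application of the Ramsey hypothesis on $\textrm{age}(\I)$ yields $C \in \textrm{age}(\I)$ and a copy $B' \cong B$ inside $C$ simultaneously monochromatic for all $r$ colorings. Embedding $C \hookrightarrow \I$ and restricting the isomorphism $B \to B'$ to $I_0$ produces the qf-type-preserving bijection $I_0 \to f(I_0) \subseteq I$ that witnesses the finite satisfiability.

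\textbf{($\Rightarrow$) Modeling implies Ramsey.} Argue by contraposition: suppose $A, B \in \textrm{age}(\I)$ and $k$ admit no Ramsey $C$. Set $L^\ast := L' \cup \{R_0,\ldots,R_{k-1}\}$ with each $R_\ell$ of arity $n := |A|$, and let $T^\ast$ expand $\textrm{Diag}(\I)$ by axioms asserting (i) every increasing $n$-tuple whose closure is $L'$-isomorphic to $A$ satisfies exactly one $R_\ell$, and (ii) inside every $B$-subcopy, the induced coloring on $A$-subcopies is non-constant. The failure of Ramsey on every finite $C \in \textrm{age}(\I)$ witnesses each finite subtheory of $T^\ast$, so compactness yields $\I^\ast \supseteq \I$ with $\I^\ast \vDash T^\ast$. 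Pass to a monster $\Mm \succeq \I^\ast$ in $L^\ast$, put $a_i := i$, and apply the modeling property to the $L'$-structure $\I$ to obtain $(b_i : i \in I)$ locally based on $\Un$. Fix any $B$-subcopy $B' \subseteq I$: by $\I$-indexed indiscernibility applied to $L^\ast$-formulas, together with rigidity, the $b$'s assign a single color $\ell^\ast$ to every $A$-subcopy of $B'$. Applying ``locally based on'' (Def.~\ref{31}) with $\Delta$ containing the finitely many $R_{\ell^\ast}$-formulas indexed by the $A$-subcopies of $B'$ produces a $B$-subcopy $B'' \subseteq I$ whose $A$-subcopies are all $R_{\ell^\ast}$-monochromatic under $\Un$, contradicting axiom (ii).

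\textbf{Main obstacle.} The delicate point is the forward direction's alignment between a coloring that lives on \emph{substructures} and the qf-type coincidences that $\textbf{Ind}(\I,L)$ demands on \emph{tuples}: for $\bar{\imath},\bar{\jmath} \in B'$ of equal qf-type, we must extract $\bar{a}_{\bar{\imath}} \equiv_\Delta \bar{a}_{\bar{\jmath}}$. Rigidity of finite substructures (from the linear order $<$) is what saves us: $\bar{\imath}$ and $\bar{\jmath}$ occupy \emph{identical positions} inside the canonical increasing enumerations of the isomorphic closures $\cl(\bar{\imath})$ and $\cl(\bar{\jmath})$, each an $A_s$-subcopy of $B'$, so monochromaticity for the $A_s$-coloring transfers to equal $\Delta$-types on the selected subtuples. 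The \texttt{qfi} hypothesis, via Prop.~\ref{qfi}, is what legitimises this finitary coloring setup in the infinite-language/function-symbol setting, by guaranteeing that the generated closures and their qf-types are isolated by quantifier-free formulas and hence enumerable in finitely many Ramsey-compatible classes.
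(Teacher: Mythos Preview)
Your two directions track the paper's proof closely and are correct in outline. The ($\Leftarrow$) direction matches almost exactly: reduce via Prop.~\ref{34}(\ref{984}), take the closure $B = C(I_0)$, one coloring per substructure-type $A_s$ occurring in $B$, then iterated Ramsey (the paper cites its earlier Claim~4.16 for this step) to find a simultaneously monochromatic copy $B'$ inside some $C \in \textrm{age}(\I)$ embedded back into $\I$. Your ``main obstacle'' paragraph correctly identifies the key point, that rigidity from $<$ makes the position $\sigma_\eta$ of a tuple inside its closure canonical, so monochromaticity of the $A_s$-coloring transfers to equal $\Delta$-types of the subtuples. The ($\Rightarrow$) direction is likewise the paper's strategy: compactness to manufacture a bad coloring over all of $\I$, a structure whose relations $R_\ell$ code that coloring, modeling property applied to $a_i := i$, indiscernibility pins down a single $\ell^\ast$, and \emph{locally based on} pulls back a monochromatic $B$-copy into $I$, contradicting axiom~(ii).

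The one real slip is your bookkeeping of where \texttt{qfi} is invoked. In the ($\Leftarrow$) direction you do \emph{not} need it: once $B$ is finite (local finiteness), it contains only finitely many tuples, hence finitely many qf-types, and each such tuple sits at a canonical position inside its (finite) closure thanks to the linear order. That is all the alignment obstacle requires; the paper is explicit that only the locally-finite and ordered hypotheses are used here. Where \texttt{qfi} actually enters is the ($\Rightarrow$) direction, and you pass over this: your axioms (i) and (ii) must be first-order \emph{sentences}, so the conditions ``closure is $L'$-isomorphic to $A$'' and ``is a $B$-subcopy'' must be replaced by the single quantifier-free formulas $\theta_{p_A}$, $\theta_{p_B}$ that \texttt{qfi} supplies. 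You should also include $\textrm{Th}(\I)_\forall$ in $T^\ast$ alongside $\textrm{Diag}(\I)$, as the paper does, so that $\theta_{p_A}, \theta_{p_B}$ retain their intended meaning in the compactness model $\I^\ast$; this costs nothing for finite satisfiability since every finite substructure of $\I$ already models $\textrm{Th}(\I)_\forall$. With that correction your argument coincides with the paper's.
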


\begin{proof} $\Leftarrow$: Here we use the locally finite and ordered hypotheses.  Suppose that age($\I$) is a Ramsey class.  Fix an initial set of parameters $\In:=(a_i : i \in I)$ in  $\mathbb{M}$.  We wish to find $\I$-indexed indiscernible $\Jn:=(b_j : j \in \I)$ locally based on the $a_i$. By Prop \ref{34} \ref{984}., it suffices to show that $\textbf{Ind}(\I,L)$ is finitely satisfiable in $\In$.

Let $\eta$ be a complete quantifier free $n$-type realized by some tuple $\ov{\imath}$ in $\I$.  Let $A$ be the substructure generated by $\ov{\imath}$ in $\I$ (say $A$ has size $N$.)  There is some sequence $\tau$ so that $\ov{\imath}$ isolates $\tau$ in $A$.  Fix this $\tau$ and call it $\sigma_\eta$.  If $\ov{\jmath} \vDash_{\I} \eta$, $\bigcup {\cl}(\ov{\jmath})$ is isomorphic to $A$ by the homomorphism induced by $\ov{\jmath} \mapsto \ov{\imath}$.  If $\ov{b}$ is an increasing copy of $A$, then $\ov{b} \uphp \sigma_\eta \vDash_{\I} \eta$ and ${\cl}(\ov{b} \uphp \sigma_\eta) = \ov{b}$.  Note that for realizations $\ov{\jmath} \vDash_{\I} \eta$, ${\cl}(\ov{\jmath}) \uphp \sigma_\eta = \ov{\jmath}$, thus for $\ov{\jmath}, \ov{\jmath}' \vDash_{\I} \eta$, ${\cl}(\ov{\jmath}) = {\cl}(\ov{\jmath}') \Rightarrow \ov{\jmath} = \ov{\jmath}'$.  So we have shown that $\sigma_\eta$ sets up a correspondence 
\begin{equation}\label{eq6}
\ov{\jmath} \mapsto {\cl}(\ov{\jmath})
\end{equation}
between realizations of $\eta$ in $\I$ and copies of $A$ in $\I$.  

Now let $\Gamma_0 \subseteq \Gamma$ be a finite subset.  $\Gamma_0$ mentions only finitely many formulas $\{\varphi_1,\ldots,\varphi_l\}=:\Delta$.  We may assume that the variables occurring in $\Gamma_0$ are $x_{p_1},\ldots,x_{p_r}$ for some increasing tuple $\ov{p}$ in $\I$.  Let $B:= \bigcup \cl(p_1,\ldots,p_r)$ and let $\ov{p}$ isolate the sequence $\tau_B$ in $B$. Let $\eta_1,\ldots,\eta_s$ be the complete quantifier-free types realized in the set $\{p_1,\ldots,p_r\}$.  It suffices to find a copy $B'$ of $B$ in $\I$ such that 
\begin{equation}\label{eq4} \textrm{for all~} 1 \leq t \leq s, \textrm{~for all realizations~} \ov{\jmath},\ov{\jmath}' \textrm{~of~} \eta_t \textrm{~in~} B', \ov{a}_{\ov{\jmath}} \equiv_{\Delta} \ov{a}_{\ov{\jmath}'}
\end{equation}
\nin since then $\ov{b}' \uphp \tau_B \vDash \Gamma_0$, for $\ov{b}'$ the increasing enumeration of $B'$.

The argument in \citep[{Claim 4.16}]{sc12} shows that we only need to accomplish Eq.~(\ref{eq4}) for one $\eta_t$, as the rest follows by induction.  So fix a complete quantifier-free $n$-type $\eta_t$ realized in $\I$.  For some choice of $\ov{\imath} \vDash_{\I} \eta_t$, let $\bigcup {\cl}(\ov{\imath})=:E$.  Linearly order the finitely many $(\Delta,n)$-types, and suppose there are $K$ of them, for some finite $K$.  Define a $K$-coloring on all copies $E'$ of $E$ in $\I$: $E'$ gets the $k$-th color if its increasing enumeration $\ov{e}'$ has the property that $\ov{e}' \uphp \sigma_{\eta_t} =:\ov{\jmath}$ indexes $\ov{a}_{\ov{\jmath}}$ with the $k$-th $\Delta$-type.   By the assumption of a Ramsey class, there is a copy $B_t$ of $B$ in $\I$ that is homogeneous for this coloring.  Since all copies $E'$ of $E$ in $B_t$ get the same color, by definition of the coloring, there is a $(\Delta,n)$-type $\pi(\ov{x})$, and all $\ov{\jmath} \vDash_{\I} \eta$ such that $\ov{\jmath} = \ov{e}' \uphp \sigma_{\eta_t}$ for $\ov{e}'$ the increasing enumeration of some $E' \cong E$ in $B_t$ are such that $\ov{a}_{\ov{\jmath}} \vDash \pi$.  But every realization of $\eta$ in $B_t$ is such a $\ov{\jmath}$ by Eqn.~(\ref{eq6}) and the fact that $\bigcup \cl(\cdot)$ acts as a closure relation under which $B_t$ is closed.  
\end{proof}

\begin{proof} $\Rightarrow$: Let $\K := \textrm{age}(\I)$.  Suppose that $\I$-indexed indiscernible sets have the modeling property.  We want to show that age($\I$) is a Ramsey class.  We adapt the well-known technique of compactness in partition results to our context:

\begin{claim} Let $\I$ be \texttt{qfi}, locally finite and linearly ordered by one of its relations.   If for all $k < \omega$ and $A,B \in \K$: $I \ar (B)^A_k$, then $\K$ is a Ramsey class.
\end{claim}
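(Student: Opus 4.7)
The plan is the standard compactness-of-colorings argument that lifts bad colorings on finite substructures of $\I$ to a single bad coloring of ${I \choose A}$. Fix $A, B \in \K$ and $k < \omega$, and suppose for contradiction that no finite $C \in \K$ satisfies $C \ar (B)^A_k$. Since $\I$ is locally finite, every finite substructure of $\I$ is finitely generated, hence lies in $\K = \textrm{age}(\I)$; so for each finite substructure $C$ of $\I$ we may fix a coloring $f_C : {C \choose A} \to k$ admitting no homogeneous $B$-substructure inside $C$.

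Next I would deploy compactness. Equip $k^{I \choose A}$ with the product topology, which is compact by Tychonoff's theorem (since $k$ is finite). For each finite substructure $C$ of $\I$, let
\[
X_C := \{ f \in k^{I \choose A} : f \uphp {C \choose A} \textrm{~admits no homogeneous~} B\textrm{-substructure in~} C \}.
\]
Each $X_C$ is clopen (it depends only on the finitely many coordinates in ${C \choose A}$) and nonempty (any extension of $f_C$ to ${I \choose A}$ lies in it). The family $\{X_C\}$ has the finite intersection property: given finite substructures $C_1, C_2$ of $\I$, the substructure $C$ generated by $C_1 \cup C_2$ is again finite by local finiteness, and $X_C \subseteq X_{C_1} \cap X_{C_2}$ since a coloring that avoids homogeneous $B$-substructures inside the larger $C$ avoids them inside $C_1$ and $C_2$ as well.

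By compactness, pick $f \in \bigcap_C X_C$. For any $B$-substructure $B' \subseteq \I$ (finite since $B$ is), taking $C := B'$ gives $f \in X_{B'}$, so $f \uphp {B' \choose A}$ has no homogeneous $B$-substructure inside $B'$; in particular $B'$ itself is not $f$-homogeneous. Thus $f$ witnesses $I \not\ar (B)^A_k$, contradicting the hypothesis. There really isn't a hard step here; the one point worth emphasizing is that local finiteness of $\I$ is exactly the feature ensuring that the directed family of finite substructures of $\I$ exhausts $I$ and enjoys the finite intersection property required for the compactness step, so neither the \texttt{qfi} hypothesis nor the linear order enters this direction of the argument.
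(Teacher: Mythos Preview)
Your argument is correct, and it takes a genuinely different route from the paper's. The paper proves this claim via first-order compactness: it introduces new predicate symbols $P_0,\ldots,P_{k-1}$ for the colors, and writes down a theory $S$ extending $T_\forall \cup \textrm{Diag}(\I)$ expressing that the $P_i$ give a $k$-coloring of copies of $A$ with no homogeneous copy of $B$. For this to be a first-order theory, the paper must replace the complete quantifier-free types $p_A(\ov{x})$, $p_B(\ov{x})$ by single formulas, which is exactly where the \texttt{qfi} hypothesis is invoked; the linear order is used implicitly to identify $A$-substructures with increasing tuples satisfying $p_A$. Finite satisfiability of $S$ comes from the bad colorings on finite substructures, and a model of $S$ yields a bad coloring of $\I$, contradicting $I \ar (B)^A_k$. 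Your approach replaces logical compactness by topological compactness on the space $k^{I \choose A}$ of colorings, working directly with substructures rather than axiomatizing them. This is slightly more elementary and, as you correctly observe, dispenses with both the \texttt{qfi} hypothesis and the linear order for this direction; only local finiteness is doing work (to ensure that finite substructures of $\I$ lie in $\K$, that $B$ is finite, and that finite unions generate finite substructures for the FIP step). The paper's syntactic route has the minor advantage of staying within the model-theoretic idiom of the rest of the argument, but your version shows that the hypotheses of the claim are stronger than necessary for this half.
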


\begin{proof} Let $T:=$ Th($\I$), $k, A, B, \I$ as above and suppose $A, B$ have cardinality $n, N$, respectively.  Let $L^+ := L' \cup \{P_0,\ldots,P_{k-1}\}$ and consider the following $L^+$-theory $S$.  For the complete quantifier free types $p_D$ for finite substructures $D \subseteq \I$, substitute a formula equivalent modulo $T_\forall$, using the \texttt{qfi} hypothesis.

\begin{eqnarray} S := T_\forall \cup \textrm{Diag}(\I) \cup \{\forall \ov{x} (p_A(\ov{x}) \ar \bigvee_{i<k} P_i(\ov{x}))\} \cup \nonumber \\ 
\{\neg \exists \ov{x} (P_i(\ov{x}) \wedge P_j(\ov{x})): i \neq j < k\} \cup \nonumber \\ 
\{ \neg \exists \ov{x} (p_B(\ov{x}) \wedge \bigvee_{s<k} \left( \bigwedge_{1 \leq i_1<\ldots<i_n \leq N} (p_A(x_{i_1},\ldots,x_{i_n}) \ar P_s(x_{i_1},\ldots,x_{i_n})) \right) ~) \} \nonumber
\end{eqnarray}

\vspace{.1in} 
\nin If we assume that no $C$ exists in $\K$ such that $C \ar (B)^A_k$, then $S$ is finitely satisfiable, by taking finitely generated substructures of $\I$ and a bad coloring on such a substructure in order to interpret the new predicates, $P_i$. Note that the formulas equivalent to complete quantifier-free types in $\I$ are equivalent to the same types in models of $T_\forall$ (in particular, in substructures of $\I$).  By compactness,  $S$ is satisfied by some structure $\J$ whose restriction to the constants in Diag($\I$) is a structure $\I^\ast$ whose $L'$-reduct is isomorphic to $\I$ by some map $f : I^\ast \ar I$.  There is a coloring by the $P_i^{\J}$ of the $A$-substructures of $\J$ for which there is no copy of $B$ in $\J$ homogeneous for this coloring. If we restrict this coloring to ${\I^\ast \choose A}$, there is still no homogeneous copy of $B$.  By standard methods of reducts and expansions, the map $f$ yields a $k$-coloring of the $A$-substructures of $\I$ for which there is no homogeneous copy of $B$.
\end{proof}

Now fix $\I$ as in the statement of the theorem. The proof continues as in \cite{sc12}; we repeat a shortened proof here for completeness.  At this point the \texttt{qfi} hypothesis is no longer needed.

\begin{claim} Fix $A,B \in \K$ and $k<\omega$. Then $I \ar (B)^A_k$.
\end{claim}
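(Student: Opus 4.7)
The plan is to derive $I \ar (B)^A_k$ from the modeling property by encoding the coloring into the $L$-type of an initial $I$-indexed set of parameters in a target monster model, extracting an $\I$-indexed indiscernible locally based on them, and then using the ``locally based on'' property together with indiscernibility to pull a homogeneous $B$-copy back into $\I$. Setting $n := |A|$ and $N := |B|$, I will take the target language to be $L := \{P_0, \ldots, P_{k-1}\}$ of $n$-ary relation symbols, build an $L$-structure $M_0$ with universe $I$ by declaring $P_s^{M_0}(\ov{x})$ to hold iff $\bigcup \ov{x}$ is an $A$-substructure of $\I$ of color $s$ (the interpretation on tuples whose underlying set is not an $A$-copy is immaterial), let $\Mm$ be a monster model of $\textrm{Th}(M_0)$, and set $\In := (a_i : i \in I)$ with $a_i := i$. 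The modeling property then delivers an $\I$-indexed indiscernible $\Jn := (b_i : i \in I)$ in $\Mm$ locally based on $\In$.

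Next I will fix a $B$-substructure $B_0 \subseteq \I$, which exists because $B \in \textrm{age}(\I)$ and $\I$ is locally finite, and let $\ov{\jmath} := (j_1, \ldots, j_N)$ be its $<^{\I}$-increasing enumeration. Consider the finite $L$-formula set
\[
\Delta := \{P_s(x_{m_1}, \ldots, x_{m_n}) : s < k,\ 1 \leq m_1 < \cdots < m_n \leq N\}.
\]
The ``locally based on'' property, applied to $\ov{\jmath}$ and $\Delta$, yields a tuple $\ov{\imath} := (i_1, \ldots, i_N)$ from $I$ with $\qt^{L'}(\ov{\imath};\I) = \qt^{L'}(\ov{\jmath};\I)$ and $\textrm{tp}^{\Delta}(\ov{a}_{\ov{\imath}}) = \textrm{tp}^{\Delta}(\ov{b}_{\ov{\jmath}})$. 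Equality of q.f.\ $L'$-types forces $i_1 <^{\I} \cdots <^{\I} i_N$ and forces $B' := \bigcup \ov{\imath}$ to be a substructure of $\I$ isomorphic to $B_0$, hence a $B$-copy in $\I$.

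Homogeneity of $B'$ for the original coloring is then immediate. Given two $A$-subcopies $A_1', A_2' \subseteq B'$ in their increasing enumerations $\ov{\imath}^r = \ov{\imath} \uphp \sigma^r$ (for $r = 1,2$), the correspondingly-positioned subtuples $\ov{\jmath}^r = \ov{\jmath} \uphp \sigma^r$ enumerate increasing $A$-copies inside $B_0$ and so share a common q.f.\ $L'$-type in $\I$ (any $L'$-isomorphism between $A$-copies preserves the linear order $<$, so must match increasing enumerations). Indiscernibility of $\Jn$ gives $\textrm{tp}^L(\ov{b}_{\ov{\jmath}^1}) = \textrm{tp}^L(\ov{b}_{\ov{\jmath}^2})$; intersecting with the $\Delta$-agreement and transporting back through $\In$ yields $P_s(\ov{a}_{\ov{\imath}^1}) \Leftrightarrow P_s(\ov{a}_{\ov{\imath}^2})$ for every $s < k$, that is, $c(A_1') = c(A_2')$.

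The main conceptual point to watch, rather than a genuine obstacle, is the interplay between the $L'$-structure used by indiscernibility and the fresh $L$-structure used to encode the coloring: $\Delta$ must be expressive enough to recover the color of every $A$-subtuple of $\ov{\jmath}$ in parallel, and simultaneously every increasing $A$-enumeration inside $B_0$ must carry the same q.f.\ $L'$-type in $\I$, which is precisely what allows a single application of indiscernibility to transport the colors uniformly. Once this correspondence is arranged, $I \ar (B)^A_k$ follows, and combining the two claims concludes that $\K = \textrm{age}(\I)$ is a Ramsey class.
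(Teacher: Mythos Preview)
Your proof is correct and follows essentially the same approach as the paper: encode the coloring into finitely many $n$-ary relations on a target structure with universe $I$, apply the modeling property to the identity-indexed parameters to obtain an $\I$-indexed indiscernible, pick a $B$-copy inside $\I$, and use the ``locally based on'' $\Delta$-agreement together with indiscernibility to pull back a homogeneous $B$-copy. The only cosmetic differences are that the paper names the relations $R_1,\ldots,R_k$ and restricts their extensions to increasing tuples realizing $p_A$, and it phrases the homogeneity step by first fixing the uniform color $l_0$ on the $b$-side rather than comparing two $A$-subcopies at a time; your explicit description of $\Delta$ and your pairwise comparison are logically equivalent reformulations.
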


\begin{proof}  Fix a $k$-coloring of the $A$-substructures of $\I$, $g : {\I \choose A} \rightarrow \{1, \ldots, k\}$.  Since $\I$ is linearly ordered, we can understand $g$ as being defined on $n$-tuples $\ov{a} \vDash_{\I} p_A$.  We need to find $B' \subseteq I$ isomorphic to $B$, homogeneous for this coloring.

Let $A$ have size $n$.  Fix a language $L = \{R_1,\ldots,R_k\}$ with $k$ $n$-ary relations and construct an $L$-structure $\M$ as follows:

\begin{enumerate}
\item $|\M| = I$
\item The relation $R_s$, $1 \leq s \leq k$, is interpreted as follows:

For $i_1, \ldots , i_n$ from $|\M|$,  

\hspace{-.4in} $R^{\M}_s(i_1, \ldots, i_n) \Leftrightarrow$

\begin{enumerate}
\item $\ov{\imath} \vDash_{\I} p_A$, and
\item $g( ( i_1, \ldots, i_n ) ) = s$
\end{enumerate}
\end{enumerate}

Let $(a_i : i \in I)$ be the $I$-indexed set in $\M$ such that $a_i = i$.  We work in a monster model $\Mm$ of Th($\M$).  By assumption, we can find an $L'$-generalized indiscernible $(b_j : j \in I)$ in $\Mm$ locally based on the $a_i$. Since $\K$=age($\I$), we may find a copy of $B$ in $\I$, $D'$.  By assumption, $D'$ is a finite structure.  Enumerate $D'$ in $<^{D'}$-increasing order as $( j_k : k \leq N )$.  By the modeling property, for $\Delta :=L$, there is some $i_1, \ldots, i_N$ such that 
\begin{eqnarray}\label{e3}
\textrm{qftp}^{L'}(i_1, \ldots, i_N;\I) = \textrm{qftp}^{L'}(j_1, \ldots, j_N;\I), \textrm{and} \nonumber \\
\textrm{tp}^{\Delta}(b_{j_1}, \ldots, b_{j_N}; \M_1 ) = \textrm{tp}^{\Delta}(a_{i_1}, \ldots, a_{i_N}; \M )
\end{eqnarray}
\begin{claim} $D := (i_k : k \leq N) \subseteq I$ is a copy of $B$ in $I$ that is homogeneous for the coloring, $g$.
\end{claim}

\begin{proof} $D \cong D'$, as $\qt^{L'}(\ov{\imath})=\qt^{L'}(\ov{\jmath})$ and $D,D'$ are structures.  So $D$ is a copy of $B$ and it remains to show that $D$ is homogeneous for the coloring, $g$.  The $b_i$ are generalized indiscernible, so there is some choice of $l_0$ so that for any increasing copy $\ov{c}'$ of $A$ in $D'$, $\vDash R_{l_0}(\ov{c}')$.  We show that all copies of $A$ in $D$ are colored $l_0$ under $g$.

Let $\ov{c}$ be any increasing copy of $A$ in $D$.  There is some sequence $\sigma$ so that $\ov{c}$ isolates $\sigma$ in $\ov{\imath}$.  By the first part of Eq.~(\ref{e3}), for $\ov{c}' := \ov{\jmath} \uphp \sigma$, $\ov{c}'$ is an increasing copy of $A$.  Thus $\vDash R_{l_0}(\ov{c}')$. By the second part of Eq.~(\ref{e3}), $\vDash R_{l_0}(\ov{c})$, i.e., $g(\ov{c}) = l_0$.
\end{proof}

\end{proof}
\end{proof}

\subsection{applications}\label{31}

We make use of $L_i$-generalized indiscernible sets for $i=s,1,2$ where the languages $L_i$ are defined as follows.

\begin{definition}\label{313} \hspace{.5in} \begin{enumerate}
\item We fix languages 

\hspace{-.5in}
\begin{tabular}{lll}
$L_s = \{\unlhd,\wedge,\lx,(P_n)_{n<\omega}\}$, &
$L_1 = \{\unlhd,\wedge,\lx,<_{\textrm{len}}\}$, &
$L_0 = \{\unlhd,\wedge,\lx\}$ \\
\end{tabular}
\item We let $I_s,I_1,I_0$ be the intended interpretations of $L_s,L_1,L_0$, respectively, on $\W$: $\unlhd$ is interpreted as the partial tree-order; $\wedge$ as the meet-function in this order; $\lx$ as the lexicographic ordering on sequences extending the partial tree-order; $P_n$ to hold of $\eta$ just in case $\ell(\eta) = n$; $\eta <_{\textrm{len}} \nu$ to hold just in case $\ell(\eta) < \ell(\nu)$.
\end{enumerate}
\end{definition}

\begin{corollary}\label{314} age$(I_0)$ is a Ramsey class.
\end{corollary}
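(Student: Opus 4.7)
The plan is to invoke Theorem \ref{37} directly. The structure $I_0 = (\W, \unlhd, \wedge, \lx)$ is built from a finite language $L_0$, and $\lx$ is a relation that linearly orders the underlying set $\W$, so two of the three hypotheses of Theorem \ref{37} are immediate once we verify local finiteness.

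For local finiteness I would argue that any finite tuple $\bar\eta = (\eta_1, \dots, \eta_n)$ from $\W$ generates a finite substructure under $\wedge$: the set $\{\eta_i \wedge \eta_j : i,j \leq n\}$ is already closed under further meets, because in a meet-semilattice arising from a tree order the meet operation is associative and idempotent and the set of pairwise meets of a finite set is closed under arbitrary meets. Since there are no other function symbols in $L_0$, the substructure generated by $\bar\eta$ has at most $\binom{n}{2}+n$ elements. Combined with finiteness of $L_0$, Proposition \ref{qfi}(2) then yields that $I_0$ is \texttt{qfi}.

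With all three hypotheses of Theorem \ref{37} verified, it remains to supply the modeling property for $I_0$-indexed indiscernibles. This is precisely the content cited from \cite{kks11, tats12} in the introduction, so I would simply quote that result. The direction ``modeling property $\Rightarrow$ age is Ramsey'' of Theorem \ref{37} then delivers the conclusion that age$(I_0)$ is a Ramsey class.

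The main (really, the only) thing to justify carefully is the local finiteness claim for $(\W, \wedge)$, since everything else is either an instant verification of the signature or a quotation of Theorem \ref{37} and the cited modeling property. There is no combinatorial obstacle here; the corollary is a direct application of the dictionary theorem to an index structure whose modeling property is already known.
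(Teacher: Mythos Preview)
Your proposal is correct and follows essentially the same route as the paper: verify that $I_0$ is locally finite in a finite language, invoke Proposition~\ref{qfi}(2) to obtain \texttt{qfi}, cite the modeling property for $I_0$-indexed indiscernibles, and apply Theorem~\ref{37}. The paper is terser (it simply asserts local finiteness rather than spelling out the pairwise-meets argument), and it cites only \cite{tats12} for the modeling property of $I_0$, but the logical structure is identical.
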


\begin{proof} 
$I_0$-indexed indiscernible sets have the modeling property by a result from \cite{tats12}. For completeness, an alternate proof of this result is given as Theorem \ref{312}.

It remains to verify the conditions of Theorem \ref{37}.  Since $I_0$ is locally finite in a finite language, $I_0$ is \texttt{qfi} by Prop \ref{qfi}.  Thus by Theorem \ref{37}, age($I_0$) is a Ramsey class.
\end{proof}

\begin{corollary}[\cite{fou99}]\label{315} age$(I_s)$ is a Ramsey class.
\end{corollary}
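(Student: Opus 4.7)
The plan is to mirror the proof of Corollary \ref{314}: verify the three hypotheses of Theorem \ref{37} for $I_s$ and then apply the theorem. The hypotheses to check are (a) $I_s$-indexed indiscernibles have the modeling property, (b) $I_s$ is locally finite and linearly ordered by one of its relations, and (c) $I_s$ is \texttt{qfi}.

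For (a), I would simply cite \cite{kks11}, which establishes the modeling property for $I_s$-indexed indiscernibles. For (b), $\lx$ linearly orders $\W$ by construction; local finiteness follows because the only function symbol of $L_s$ is $\wedge$, and any finite subset $F \subseteq \W$ closes under $\wedge$ after finitely many applications, since each new meet is an initial segment of some element of $F$.

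The one substantive step is (c). Here Prop.~\ref{qfi}(2) does not apply directly, because $L_s$ is infinite (it contains the predicates $(P_n)_{n<\omega}$). I would instead establish \texttt{qteqf} by hand and invoke Prop.~\ref{qfi}(1). Given a complete quantifier-free $L_s$-type $q(x_1,\ldots,x_k)$ realized in $I_s$, each variable $x_j$ has a unique level $n_j$, so $P_{n_j}(x_j) \in q$. Applying Prop.~\ref{qfi}(2) to the locally finite, finite-language structure $I_0$ produces a quantifier-free $L_0$-formula $\theta_0(\ov{x})$ equivalent in $I_0$ to the restriction $q\,\uphp\, L_0$. I would then set
\begin{equation*}
\theta_q(\ov{x}) := \theta_0(\ov{x}) \wedge \bigwedge_{j=1}^{k} P_{n_j}(x_j).
\end{equation*}
In $I_s$, every element lies in exactly one $P_m$, so the positive literal $P_{n_j}(x_j)$ already entails $\neg P_m(x_j)$ for all $m \neq n_j$. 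Consequently $q(I_s) = \theta_q(I_s)$, which gives \texttt{qteqf} and hence \texttt{qfi}.

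The hard part is precisely this infinite-language bookkeeping in (c); once it is dispatched, Theorem \ref{37} applies verbatim to conclude that age$(I_s)$ is a Ramsey class. I would close by remarking that the same reduction works for any locally finite $I$ in a language obtained from a finite language by adjoining unary predicates that partition $I$ into classes definable from the finite part, so the argument is morally a routine extension of the proof of Corollary \ref{314}.
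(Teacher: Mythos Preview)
Your overall strategy is the same as the paper's: cite the modeling property for $I_s$, check the ordering and local finiteness, reduce the \texttt{qfi} verification to the already-known case of $I_0$ by adjoining level predicates, and then invoke Theorem~\ref{37}. The paper carries out step (c) in the \texttt{qfi} formulation (via Th$(I_s)_\forall$) rather than the \texttt{qteqf} formulation you use, but by Proposition~\ref{qfi}(1) these are interchangeable.

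There is, however, a small but genuine gap in your argument for (c). Your formula $\theta_q$ records the $L_0$-type of $\ov{x}$ together with the level of each \emph{variable} $x_j$, and you conclude $q(I_s)=\theta_q(I_s)$. But a complete quantifier-free $L_s$-type also contains literals of the form $P_n(\tau(\ov{x}))$ for $\wedge$-terms $\tau$, and the level of a meet is \emph{not} determined by the $L_0$-type of $\ov{x}$ together with the levels of the $x_j$. Concretely, take $\eta=\langle 0,0,0\rangle$, $\nu=\langle 0,1\rangle$ and $\eta'=\langle 0,0,0\rangle$, $\nu'=\langle 1,0\rangle$: the pairs $(\eta,\nu)$ and $(\eta',\nu')$ have the same complete quantifier-free $L_0$-type and the same levels on the variables, yet $\eta\wedge\nu$ sits at level~$1$ while $\eta'\wedge\nu'$ sits at level~$0$. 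So your $\theta_q$ does not isolate $q$ in $I_s$ for this $2$-type.

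The fix is exactly what the paper does: invoke Remark~\ref{402} and verify \texttt{qfi} only for types $q(\ov{x})$ of \emph{substructures} of $I_s$. In that case every $\wedge$-term already equals some $x_j$ (this equality is recorded in $p_0$), so pinning down the levels of the variables automatically pins down the levels of all terms, and your $\theta_q$ works. Alternatively, you could enlarge $\theta_q$ by conjoining $P_{m}(\tau(\ov{x}))$ for each of the finitely many terms $\tau$ in the generated substructure. Either way the repair is routine; once it is made, your proof and the paper's coincide.
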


\begin{proof} In \cite{kks11,tats12}, it was concluded that $I_s$-indexed indiscernible sets have the modeling property, relying on a key result from \cite{sh90}.\footnote{By Theorem \ref{37}, Corollary \ref{412} presents an alternate route to proof.}  It remains to verify the conditions in Theorem \ref{37}.

Note that $I_0 = I_s \uphp \{\unlhd,\wedge,\lx\}$.  In Cor \ref{314} we argue that $I_0$ is \texttt{qfi} by way of Remark \ref{402}.  Let $T_s$ be the theory of $I_s$ and $T_0$ the theory of $I_0$.  

Thus, for any complete quantifier-free $(L_0, m)$-type of a substructure of $I_0$, $p$, there exists an $(L_0, m)$-formula $\theta_p$ such that:
\begin{equation}\label{e1}
(T_0)_\forall \cup \{\theta_p(\ov{x}) \} \vdash p(\ov{x})
\end{equation}
  For any complete quantifier-free $(L_s,m)$-type $q(\ov{x})$ realized in $I_s$, there is some $p_0$ so that $p_0 = q \uphp L_0$.  Thus, for some choice of $t_l \in \{0,1\}$ for $l<\omega$:
\begin{equation}
p_{0}(\ov{x}) \cup \{P_l(x_i)^{t_l} :i<m, l<\omega\} \vdash q(\ov{x})
\end{equation}
Using Eq.~(\ref{e1}) we have,
\begin{equation}
(T_s)_\forall \cup \{\theta_{p_0}(\ov{x})\} \cup \{P_l(x_i)^{t_l} :i<m, l<\omega\} \vdash q(\ov{x})
\end{equation}
We use the facts that, for all $i \neq k < \omega$,
\begin{equation}
(T_s)_\forall \vdash (\forall y \neg (P_i(y) \wedge P_k(y)))
\end{equation}
and any complete quantifier-free type $q$ realized in $I_s$ contains at least one $P_k(x_j)$ for every $j<m$ (though in other models of $T_s$ this may not be the case.)  Thus there exist $i_0, \ldots, i_{m-1} < \omega$ such that, 
\begin{equation}
(T_s)_\forall \cup \{\theta_{p_0}(\ov{x}) \wedge (\bigwedge_{j<m} P_{i_j}(x_j)) \} \vdash q(\ov{x})
\end{equation}

Thus we have shown that $I_s$ is \texttt{qfi}.  By Theorem \ref{37}, age($I_s$) is a Ramsey class.
\end{proof}

We give an additional remark in connection with {\citep[{Example 17}]{tats12}}.  Here the authors provide the example of $I_t := I_0 \uphp \{\unlhd, \lx \}$ and show that $I_t$-indexed indiscernibles do not have the modeling property.  We observe that this fact is also a Corollary of Theorem \ref{37}.  Let $L_t := \{\unlhd, \lx \}$.

\begin{corollary}[\cite{tats12}] $I_t$-indexed indiscernibles do not have the modeling property.
\end{corollary}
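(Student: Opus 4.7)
The plan is to apply Theorem~\ref{37} in its contrapositive direction: since Theorem~\ref{37} is an ``iff,'' it suffices to show that age$(I_t)$ is not a Ramsey class.

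First I would check that $I_t$ meets the hypotheses of Theorem~\ref{37}. The language $L_t = \{\unlhd,\lx\}$ is finite and purely relational, so every finite subset of $\omega^{<\omega}$ generates itself as an $L_t$-substructure; in particular, $I_t$ is locally finite, and by Proposition~\ref{qfi}(2) it is \texttt{qfi}. The relation $\lx$ linearly orders $\omega^{<\omega}$. So Theorem~\ref{37} applies and reduces the corollary to the assertion that age$(I_t)$ is not Ramsey.

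For this assertion, the guiding intuition is that dropping the meet function $\wedge$ from $L_0$ collapses $L_t$-isomorphism types: a pair of $\unlhd$-incomparable $\lx$-ordered nodes is a single $L_t$-type regardless of how deep its meet sits in $\omega^{<\omega}$. My plan is to leverage this collapse to produce finite $A, B \in$ age$(I_t)$ and, for each candidate $C \in$ age$(I_t)$, a $k$-coloring of $\binom{C}{A}$ under which no copy of $B$ is monochromatic. Heuristically, one uses the richer meet data available in the ambient $I_t$ to drive the coloring, and chooses $B$ so that embedded copies in any $C$ are forced to witness multiple meet configurations among their $A$-copies, with the coloring correlated to those differences. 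A natural first attempt takes $A$ a two-element $\unlhd$-antichain and $B$ a $4$-element $L_t$-configuration, coloring $\{x,y\} \in \binom{C}{A}$ by a function of $\ell(x \wedge y)$ sensitive to the branching pattern.

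The main obstacle is selecting a concrete $(A,B)$ and coloring for which the non-monochromatic property holds uniformly over all $C \in$ age$(I_t)$---a combinatorial analysis of how meets distribute across $\lx$-ordered incomparable tuples in $\omega^{<\omega}$. Once such an obstruction is established, Theorem~\ref{37} immediately yields the corollary.
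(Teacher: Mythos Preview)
Your reduction via Theorem~\ref{37} is correct and matches the paper exactly: $L_t$ is finite relational, so $I_t$ is locally finite and \texttt{qfi} by Proposition~\ref{qfi}(2), and $\lx$ linearly orders it; hence the modeling property for $I_t$ is equivalent to age$(I_t)$ being Ramsey.

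Where you diverge is in how to show age$(I_t)$ is not Ramsey. The paper does not attempt a direct coloring obstruction. Instead it invokes the standard structural fact (Ne\v{s}et\v{r}il) that a Ramsey class of linearly ordered structures must have the amalgamation property, and then exhibits a concrete amalgamation failure in age$(I_t)$: a four-element ``fan'' $A$ (a root with three $\unlhd$-successors) embeds into two five-element trees $B_1,B_2$ by inserting an intermediate node on different sides; any amalgam would force the two inserted nodes to be $\unlhd$-comparable, and either choice contradicts the data in one of the $B_i$. This is short, explicit, and requires no coloring argument at all.

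Your direct-coloring plan is reasonable in spirit---meet depth is exactly the information $L_t$ forgets---but as written it is not a proof: you correctly identify the main obstacle (producing a specific $(A,B,k)$ and coloring that defeats every $C$) and then stop without resolving it. Moreover, your suggested coloring ``by a function of $\ell(x\wedge y)$'' needs work, since $\ell(x\wedge y)$ ranges over arbitrarily many values as $C$ varies, so a fixed finite $k$ is not immediate; you would need to coarsen this to something like a comparison of meet depths among several pairs inside $B$. The amalgamation route sidesteps all of this and is the argument you should use.
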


\begin{proof} Let $K_t := \textrm{age}(I_t)$.  By Theorem \ref{37}, $I_t$-indexed indiscernibles have the modeling property just in case $K_t$ is a Ramsey class, by a quick verification of the conditions. By {\citep[{Theorem 4.2(i)}]{ne05}} and the presence of a linear ordering, if $K_t$ is a Ramsey class, then $K_t$ has the amalgamation property.  However, an example analyzed in {\citep[{Example 17}]{tats12}} provides the counterexample to amalgamation.  Let $A$ be the finite structure given by $a_0 \unlhd a_1, a_2, a_3$ and $a_0 \lx a_1 \lx a_2 \lx a_3$.  Let $B_i$ be the structures below, where a diagonal edge between nodes denotes that the bottom node is $\unlhd$-related to the top node, the absence of an edge between nodes denotes no $\unlhd$-relation, and $\lx$ both refines $\unlhd$ and obeys the rule that $x \lx y$ if $x$ is to the left of $y$ on the page.  Then $A$ $L_t$-embeds into $B_1, B_2$ by $a_i \mapsto b_i, c_i$.

\begin{pgfpicture}{-5cm}{-.5cm}{2cm}{2.5cm}

\pgfputat{\pgfxy(-4.5,1)}{\pgfbox[center,center]{$A$:}}

\pgfnodecircle{Node41}[stroke]{\pgfxy(-3,0)}{.3cm}
\pgfnodecircle{Node31}[stroke]{\pgfxy(-3,2)}{.3cm}
\pgfnodecircle{Node11}[stroke]{\pgfxy(-4,2)}{.3cm}
\pgfnodecircle{Node51}[stroke]{\pgfxy(-2,2)}{.3cm}
\pgfnodeconnline{Node41}{Node31}
\pgfnodeconnline{Node41}{Node11}
\pgfnodeconnline{Node41}{Node51}

\pgfputat{\pgfxy(-3,0)}{\pgfbox[center,center]{$a_0$}}
\pgfputat{\pgfxy(-3,2)}{\pgfbox[center,center]{$a_2$}}
\pgfputat{\pgfxy(-4,2)}{\pgfbox[center,center]{$a_1$}}
\pgfputat{\pgfxy(-2,2)}{\pgfbox[center,center]{$a_3$}}

\pgfputat{\pgfxy(-1,1)}{\pgfbox[center,center]{$B_1$:}}

\pgfnodecircle{Node4}[fill]{\pgfxy(.5,0)}{.07cm}
\pgfnodecircle{Node3}[fill]{\pgfxy(.5,2)}{.07cm}
\pgfnodecircle{Node2}[fill]{\pgfxy(0,1)}{.07cm}
\pgfnodecircle{Node1}[fill]{\pgfxy(-.5,2)}{.07cm}
\pgfnodecircle{Node5}[fill]{\pgfxy(1.5,2)}{.07cm}
\pgfnodeconnline{Node1}{Node2}
\pgfnodeconnline{Node2}{Node3}
\pgfnodeconnline{Node2}{Node4}
\pgfnodeconnline{Node4}{Node5}

\pgfputat{\pgfxy(.5,0)}{\pgfbox[right,top]{$b_0$}}
\pgfputat{\pgfxy(.5,2)}{\pgfbox[right,top]{$b_2$}}
\pgfputat{\pgfxy(-.5,2)}{\pgfbox[right,top]{$b_1$}}
\pgfputat{\pgfxy(1.5,2)}{\pgfbox[right,top]{$b_3$}}
\pgfputat{\pgfxy(0,1)}{\pgfbox[right,top]{$b_4$}}

\pgfputat{\pgfxy(2.5,1)}{\pgfbox[center,center]{$B_2$:}}

\pgfnodecircle{Node20}[fill]{\pgfxy(4,0)}{.07cm}
\pgfnodecircle{Node30}[fill]{\pgfxy(4,2)}{.07cm}
\pgfnodecircle{Node40}[fill]{\pgfxy(4.5,1)}{.07cm}
\pgfnodecircle{Node10}[fill]{\pgfxy(3,2)}{.07cm}
\pgfnodecircle{Node50}[fill]{\pgfxy(5,2)}{.07cm}
\pgfnodeconnline{Node10}{Node20}
\pgfnodeconnline{Node20}{Node40}
\pgfnodeconnline{Node30}{Node40}
\pgfnodeconnline{Node40}{Node50}

\pgfputat{\pgfxy(4,0)}{\pgfbox[right,top]{$c_0$}}
\pgfputat{\pgfxy(4,2)}{\pgfbox[right,top]{$c_2$}}
\pgfputat{\pgfxy(3,2)}{\pgfbox[right,top]{$c_1$}}
\pgfputat{\pgfxy(5,2)}{\pgfbox[right,top]{$c_3$}}
\pgfputat{\pgfxy(4.5,1)}{\pgfbox[right,top]{$c_4$}}
\end{pgfpicture}

Suppose there exists some amalgam $C$ for $(A,B_1,B_2)$.  By a small abuse of notation, we use the labels ``$b_i, c_i$,'' $0 \leq i \leq 4$, to refer to the \textit{images} of these points in $C$.  First, observe that $b_4, c_4$ in $C$ must be $\unlhd$-comparable (by inspection of $K_t$,) as both points are $\unlhd$-predecessors of the same point, $b_2(=c_2)$.  If $b_4 \unlhd c_4$, then $b_4 \unlhd c_4 \unlhd c_3 = b_3$, contradicting the data in $B_1$.  If $c_4 \unlhd b_4$, then $c_4 \unlhd b_4 \unlhd b_1 = c_1$, contradicting the data in $B_2$.  Thus, no such amalgam exists.
\end{proof}

\section{Appendix}\label{4}

As an application of EM-types, we give an alternate proof that $I_0$-indexed indiscernible sets have the modeling property.  This proof eschews \citep[{App. 2.6}]{sh90} in favor of Lemma \ref{411} below, whose statement is taken from \cite{nvt10}, where the original result is attributed to \cite{fou99}.


First we clarify the notion of height we are using.

\begin{definition} Fix a finite tree $T$ partially ordered by $\unlhd$, and let $\nu \in T$.
\begin{enumerate}
\item We say that $\textrm{ht}(\nu) = |\{ \eta : \eta \unlhd \nu, \eta \neq \nu \}|$
\item We say that $\textrm{ht}(T) = \textrm{max} \{\textrm{ht}(\nu) : \nu \in T \}$
\end{enumerate}
\end{definition}

\begin{lemma}[{\citep[{2 (2.2) Lem.~2}]{nvt10}}]\label{411}
Fix $m \in \omega$ and let ${\K}^m_u$ be the class of all finite $L_t$-substructures of $\omega^{\leq m}$ of height $m$, all of whose maximal nodes have height $m$.\footnote{The latter condition is not entirely explicit in the statement, but appears in the proof and is intended by the author.}  Then ${\K}^m_u$ is a Ramsey class.
\end{lemma}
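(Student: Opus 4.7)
The plan is to proceed by induction on $m$. The base case $m=0$ is immediate, since $\K^0_u$ consists only of singletons. For the inductive step, I would decompose a tree $T \in \K^m_u$ into the set of minimal (height-$0$) nodes together with, attached above each, a subtree lying in $\K^{m-1}_u$. Because $\lx$ is a linear order refining $\unlhd$, the height-$0$ nodes are $\lx$-totally ordered, and the subtrees above them appear in $\lx$-order; so a tree in $\K^m_u$ is essentially an $\lx$-ordered list of $\K^{m-1}_u$-subtrees, together with the root-level data at height $0$.

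Given $A, B \in \K^m_u$ and $k$ colors, I would combine the inductive hypothesis with a product-Ramsey step at the root level. The cleanest route is via Milliken's theorem on strong subtrees of $\omega^{<\omega}$: each element of $\K^m_u$ corresponds (up to $L_t$-isomorphism) to a finite strong subtree of height $m+1$, and a $k$-coloring of the $A$-substructures of a sufficiently large $C$ translates into a coloring of strong-subtree configurations in the ambient $\omega^{<\omega}$; Milliken then supplies a monochromatic copy of $B$ inside some $C \in \K^m_u$. A more hands-on alternative would organize a Hales--Jewett or Graham--Rothschild parameter-word argument level-by-level, with the induction supplying the Ramsey property strictly below height $m$.

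The main obstacle is the simultaneous preservation of $\unlhd$ and $\lx$ by substructure embeddings: $\lx$ is not freely chosen but is tied to the branching of the tree through the lexicographic rule, so the product-Ramsey step must respect the $\lx$-order on the child-list at \emph{every} node, not merely at a single level. The technical core of the argument is verifying that the $\lx$-order induced on a strong-subtree witness (canonically inherited from $\omega^{<\omega}$) coincides with the intended $\lx$-order on the target $L_t$-structure, so that monochromaticity in Milliken's sense transfers to monochromaticity for the $L_t$-substructure coloring. Once this bookkeeping is handled, the inductive construction produces a $C \in \K^m_u$ with $C \to (B)^A_k$.
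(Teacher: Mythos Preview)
The paper does not give its own proof of this lemma: it is quoted from \cite{nvt10} (where the result is attributed to Fouch\'e \cite{fou99}) and is used as a black box to derive Corollary~\ref{412}. So there is nothing in the paper to compare your argument against; any comparison would have to be with Fouch\'e's original proof, which the present paper does not reproduce.

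As for your sketch on its own terms: the high-level strategy (induction on the height, with a product/partition step handling the bottom level) is indeed how results of this kind are typically proved, and is in the spirit of Fouch\'e's argument. But two points deserve care. First, your claimed identification of elements of $\K^m_u$ with strong subtrees is not literally correct: strong subtrees are meet-closed and level-preserving inside the ambient $\omega^{<\omega}$, whereas $L_t = \{\unlhd,\lx\}$ contains no meet symbol, so $L_t$-substructures need not be meet-closed, and the ``height'' in the definition of $\K^m_u$ is the \emph{internal} height in the substructure, not the ambient level. You therefore cannot invoke Milliken directly without first explaining how an arbitrary member of $\K^m_u$ is coded by (or embedded into) a strong subtree in a way that respects the $L_t$-structure on both sides; this is exactly the kind of envelope/fill-in step the paper carries out in Corollary~\ref{412} for the $L_s$-case, and it is nontrivial. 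Second, in your inductive decomposition you assert that the subtree above each height-$0$ node lies in $\K^{m-1}_u$; for this to hold you need that every maximal chain through a given minimal node has length exactly $m+1$, which does follow from ``all maximal nodes have height $m$'', but you should also check that the induced $\lx$ on each such subtree is again of lexicographic type so that the inductive hypothesis applies. None of this is fatal, but as written the proposal is a plan rather than a proof.
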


\begin{corollary}\label{412} $\K_s$ is a Ramsey class.
\end{corollary}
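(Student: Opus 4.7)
The plan is to derive the Ramsey property for $\K_s$ directly from Lemma \ref{411} via a canonical envelope construction, together with a lifting-of-colorings argument.

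Given $A, B \in \K_s$ and $k \geq 1$, let $m$ be the maximum level of any node in $B$ (hence also in $A$, without loss of generality). For $T \in \K_s$ of maximum level at most $m$, define the envelope $T^+ \subseteq \omega^{\leq m}$ to consist of the downward closure of $T$ in $\omega^{\leq m}$, together with, for each maximal $\alpha \in T$ with $\ell(\alpha) < m$, the canonical padding path $\{\alpha \fr 0^j : 1 \leq j \leq m - \ell(\alpha)\}$. Then $T^+ \in \K^m_u$. Apply Lemma \ref{411} to $A^+, B^+, k$ to obtain $C^+ \in \K^m_u$ with $C^+ \ar (B^+)^{A^+}_k$ in the $L_t$-sense, and set $C := C^+$ viewed as an $L_s$-structure.

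The central observation is that within any $\K^m_u$-tree $T$, the level function and the meet function are both determined by the $L_t$-reduct: the level of $x \in T$ equals $\textrm{ht}(x)$ since $T$ is downward closed, and $x \wedge y$ equals the longest common $\unlhd$-predecessor of $x, y$. Moreover, an $L_t$-embedding between $\K^m_u$-structures of the same height $m$ sends each root-to-leaf chain (of $m+1$ elements) into a strictly $\unlhd$-increasing chain of $m+1$ elements in $\omega^{\leq m}$, forcing the levels to be $0, 1, \ldots, m$. Hence such an $L_t$-embedding automatically preserves $P_n$ and $\wedge$, so it is an $L_s$-embedding, and its image again lies in $\K^m_u$; in particular, $g(B^+) \in \K^m_u$ for any $L_t$-embedding $g : B^+ \ar C^+$.

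Given an $L_s$-coloring $\chi$ of ${C \choose A}$, lift to a coloring $\chi^+$ of ${C^+ \choose A^+}$ by $\chi^+(f) := \chi(f \uphp A)$ (well-defined since $f \uphp A : A \ar C$ is an $L_s$-embedding by the above, using meet-closure of $A$). A $\chi^+$-homogeneous $B^+$-copy $g(B^+) \subseteq C^+$ from Lemma \ref{411} then yields a $\chi$-homogeneous $B$-copy $g(B) \subseteq C$: each $L_s$-embedding $h : A \ar g(B)$ extends to an $L_t$-embedding $\tilde h : A^+ \ar g(B^+)$ with $\tilde h \uphp A = h$, by sending each ancestor of an $A$-element to the corresponding ancestor of $h(A)$ in $g(B^+)$ (available via downward closure) and each padding node $\alpha \fr 0^j$ to a fixed chain from $h(\alpha)$ to a leaf of $g(B^+) \in \K^m_u$. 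Then $\chi(h) = \chi^+(\tilde h)$ is constant in $h$.

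The main obstacle is verifying that $\tilde h$ is a valid $L_t$-embedding --- in particular checking injectivity and $\lx$-preservation. Injectivity follows because the padding chains above incomparable $h(\alpha)$'s lie in disjoint branches of $g(B^+)$, and because maximality of $\alpha$ in $A$ ensures the chain above $h(\alpha)$ avoids $h(A)$ entirely; $\lx$-preservation reduces to the fact that $\lx$ refines the tree order and behaves compatibly within each branch above $h(\alpha)$.
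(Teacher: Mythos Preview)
Your proof is correct and follows essentially the same strategy as the paper's: both construct an envelope of $A$ and $B$ in $\K^m_u$ (the paper calls it $\texttt{fill}_m(\cdot)$, you call it $(\cdot)^+$), appeal to Lemma \ref{411}, and transfer homogeneity back via the key observation that $L_t$-embeddings between $\K^m_u$-trees automatically preserve levels and meets, hence are $L_s$-embeddings. The paper phrases the restriction step via a carving formula $\Psi_D$ and an operator $\Ss$ rather than your direct $f \uphp A$, and records the extension-of-embeddings step (your $\tilde h$) as the one-line assertion ``every copy of $A$ in $\Ss(B_t)$ extends to a copy of $A_t$ in $B_t$'' without the injectivity and $\lx$-preservation checks you spell out; but the underlying argument is the same.
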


\begin{proof} The idea is simple, but we fill in the steps. Fix $D_t$ in ${\K}^m_u$.  We may interpret the $(P_n)_n$ naturally on $D_t$ so that for $\eta \in D_t$, $\textrm{ht}(\eta) = n \leftrightarrow P_n(\eta)$, and we may interpret the meet function $\wedge$ on $D_t$ in the usual way, as it is definable from $\unlhd$.  
In this way we obtain a natural $L_s$-expansion of $D_t$, which we call $\texttt{exp}(D_t)$.  In fact any $L_t$-embedding $f : A_t \ar B_t$ for $A_t, B_t \in {\K}^m_u$ naturally induces an $L_s$-embedding $\bar f : \texttt{exp}(A_t) \ar \texttt{exp}(B_t)$.

Fix $D \in \K_s$ such that $n$ is maximal so that $P^D_n \neq \emptyset$, and let $n \leq m$.  We define an $L_t$-structure from $D$ uniquely up to $L_t$-isomorphism.  Let $k$ be least so that the $L_s$-substructure $E_m \subseteq I_s$ on the set $k^{\leq m}$ contains a copy of $D$, and fix one such copy $D' \subseteq E_m$.  Suppose that $D'$ has $i$-many $\unlhd$-maximal elements, and choose a size-$i$ subset $Y$ of $k^m$ that $\unlhd$-majorizes these maximal elements.
Let $\texttt{fill}_m(D')$ be the $L_t$-reduct in $E_m$ on the set $\{ \eta \in k^{\leq m} : (\exists x \in Y) ~\eta \unlhd x \}$.  Then $\texttt{fill}_m(D') \in \K^m_u$.
There is a first-order $L_t$-formula $\Psi = \Psi_D$ that carves out $D'$, i.e. $\Psi(\texttt{fill}_m(D')) = D'$. For an $L_t$-structure $D_t \cong_{L_t} \texttt{fill}_m(D')$, let $\Ss(D_t)$ be defined as the $L_s$-substructure of $\texttt{exp}(D_t)$ defined on the set $\Psi(D_t)$.  Then $\Ss(D_t) \cong_{L_s} D'$.

Fix $A, B$ in $\K_s$ and $k \in \omega$.  Let $m$ be maximal so that $P_m^B$ is nonempty.
By Lemma \ref{411}, we may choose $C_t \in {\K}^m_u$ so that 
\begin{equation}\label{700}
C_t \ar (\texttt{fill}_m(B))^{\texttt{fill}_m(A)}_k 
\end{equation}
Let $C := \texttt{exp}(C_t)$.  

\begin{claim} $C \ar (B)^A_k$. 
\end{claim}

\begin{proof} Fix a coloring $c : {C \choose A} \ar k$.  
We convert $c$ into a coloring $c': {C_t \choose \texttt{fill}_m(A)} \ar k$ as follows: given $A_t$ a copy of $\texttt{fill}_m(A)$ in $C_t$, let $c'(A_t) := c(\Ss(A_t))$ (by the above, $\Ss(A_t) \cong_{L_s} A$.)
By Eqn.~(\ref{700}), there is a copy $B_t$ of $\texttt{fill}_m(B)$ in $C_t$ homogeneous for this coloring.  Then $\Ss(B_t)$ is a copy of $B$ in $C$ that is homogeneous for $c$, as every copy of $A$ in $\Ss(B_t)$ extends to a copy of $A_t$ in $B_t$.
\end{proof}
\end{proof}

The use of EM-types and Corollary \ref{412} allows us to finitize the proof of Theorem \ref{312} below, up to some applications of compactness.  All the other techniques and ideas below are not new, and may be seen in \cite{sh90,kks11} as well as the original argument in \cite{tats12}.

\begin{theorem}[\cite{tats12}]\label{312} $\I_0$-indexed indiscernible sets have the modeling property
\end{theorem}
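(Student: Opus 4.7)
The plan is to bootstrap from the $I_s$-modeling property (within reach via Corollary \ref{412} and Theorem \ref{37}) to the $I_0$-modeling property using the EM-type machinery of Section \ref{2}. First I would note that, by Corollary \ref{412}, $\textrm{age}(I_s)$ is a Ramsey class; combined with the fact that $I_s$ is \texttt{qfi} (as verified in the proof of Corollary \ref{315}), locally finite, and linearly ordered by $\lx$, Theorem \ref{37} gives the modeling property for $I_s$-indexed indiscernibles.

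Given initial parameters $\Un = (a_i : i \in I_0)$ in $\Mm$, I would view them as $I_s$-indexed and apply $I_s$-modeling to obtain an $I_s$-indiscernible $\Jn = (b_i : i \in I_s)$ locally $L_s$-based on $\Un$. By Proposition \ref{34}(\ref{988}), $\Jn$ viewed as $I_0$-indexed is locally $L_0$-based on $\Un$. By Observation \ref{38} (transitivity of \emph{locally based on}), it then suffices to produce an $I_0$-indiscernible $\Wn$ locally $L_0$-based on $\Jn$; the composition will be the desired indiscernible locally $L_0$-based on $\Un$. To produce $\Wn$ I would invoke Proposition \ref{34}(\ref{984}): existence of $\Wn$ reduces to showing $\textbf{Ind}(I_0, L)$ is finitely satisfiable in $\Jn$. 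For a finite fragment with variables indexed by $P \subseteq \W$ and formulas in $\Delta$, the task is to find an $L_0$-qftp-preserving reindexing $f : P \to \W$ so that for any $\ov{\imath}, \ov{\jmath} \subseteq f(P)$ with $\qt^{L_0}(\ov{\imath}; I_0) = \qt^{L_0}(\ov{\jmath}; I_0)$, the tuples $\ov{b}_{\ov{\imath}}$ and $\ov{b}_{\ov{\jmath}}$ agree on $\Delta$.

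The main obstacle — and where Corollary \ref{412} enters essentially — is producing such a reindexing. Since $\Jn$ is $I_s$-indiscernible, the $\Delta$-type of $\ov{b}_{\ov{\imath}}$ depends only on $\qt^{L_s}(\ov{\imath}; I_s)$, but a single $L_0$-qftp typically splits into several $L_s$-qftps (differing in level assignments) which need not share a $\Delta$-color. I would iterate a finitary Ramsey step, applying Corollary \ref{412} across the finitely many $L_s$-refinements of each $L_0$-qftp realized in the meet-closure of $P$, to shrink the ambient so that distinct $L_s$-refinements of a common $L_0$-qftp are forced to agree on $\Delta$. Standard compactness (implicit in the hypothesis of Proposition \ref{34}(\ref{984})) then packages these finite witnesses into the $I_0$-indiscernible $\Wn$. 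This is the \emph{finitization} promised at the start of Section \ref{4}: Corollary \ref{412} replaces the infinitary tree theorem of \citep[{App.~2.6}]{sh90} used in the original argument of \cite{tats12}.
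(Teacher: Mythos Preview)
Your first step---obtaining an $I_s$-indiscernible $\Jn$ that is $L_s$-locally based on the parameters via Corollary \ref{412} and Theorem \ref{37}---matches the paper exactly, and the reduction via Proposition \ref{34}(\ref{984}) and Observation \ref{38} is the right framework. The gap is in how you propose to pass from $I_s$-indiscernibility to $I_0$-indiscernibility.

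You suggest ``applying Corollary \ref{412} across the finitely many $L_s$-refinements of each $L_0$-qftp \ldots\ to shrink the ambient so that distinct $L_s$-refinements of a common $L_0$-qftp are forced to agree on $\Delta$.'' This cannot work. The Ramsey property of $\K_s$ homogenizes colorings of $L_s$-copies of a fixed finite $A \in \K_s$; but any two such copies carry the \emph{same} level pattern as $A$, so they realize the same $L_s$-types on subtuples. Since $\Jn$ is already $I_s$-indiscernible, every $L_s$-copy of $A$ is already monochromatic under any coloring that factors through $\Delta$-type. Homogenizing is therefore vacuous and does nothing to identify tuples that share an $L_0$-type but sit at different levels (different $L_s$-refinements). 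No amount of iterating $\K_s$-Ramsey will merge the $\Delta$-types of, say, a chain at levels $(1,2)$ with a chain at levels $(2,3)$.

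The paper handles this with two ideas you are missing. First, it introduces the intermediate language $L_1 = \{\unlhd,\wedge,\lx,<_{\textrm{len}}\}$ and uses \emph{classical} Ramsey on $[\omega]^m$ (coloring $m$-sets of levels by the induced $\Delta$-behavior) to thin the set of levels so that only the \emph{relative order} of levels matters; this produces an $I_1$-indiscernible $\Un$ from $\Jn$. Second, it uses a ``skew subtree'' embedding $\sigma : k^{\leq n} \to \omega^{<\omega}$ in which $i \lx j$ forces $\ell(\sigma(i)) < \ell(\sigma(j))$; on the image of $\sigma$ the $L_0$-type of a tuple determines its $L_1$-type, so $I_1$-indiscernibility of $\Un$ yields the finite $L_0$-indiscernible configuration needed to satisfy a given finite piece of $\textbf{Ind}(I_0,L)$. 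The level-Ramsey step and the skew embedding are precisely what replace the infinitary tree theorem of Shelah; Corollary \ref{412} is used only once, at the start, to secure the $I_s$-modeling property.
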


\begin{proof} In the following, numbers ``n.''~refer to items from Prop.~\ref{34}.  Let 

\noindent $\In:=(a_i : i \in \W)$ be a set of parameters in a monster model $\Mm$ of some theory.  We must show there is an $I_0$-indexed indiscernible set $L_0$-locally based on the $a_i$. 

\bs

\textbf{step 1.} By Corollary \ref{412} and Theorem \ref{37}, there is an $I_s$-indexed indiscernible $\Tn:=(d_i : i \in \W)$ that is $L_s$-locally based on the $a_i$.   By \ref{992}., EMtp$_{L_s}(\Tn$) $\supseteq$ EMtp$_{L_s}(\In$), so by \ref{988}., 
\begin{equation}\label{e6}
 \textrm{EMtp}_{L_0}(\Tn) \supseteq \textrm{EMtp}_{L_0}(\In)
\end{equation}

\bs

\textbf{step 2.} We aim to find an $I_1$-indexed indiscernible $\Un:=(e_i : i \in \W)$ that is $L_1$-locally based on $\Tn$.  By \ref{984}., $\Un$ may be obtained by the following Claim.

\begin{claim} $\textbf{Ind}(I_1,L)$ is finitely satisfiable in $\Tn$.
\end{claim}

\begin{proof}
Let $F_1 \subset$ \textbf{Ind}($I_1$,$L$) be some finite subset.  There is some $n$ so that all variables occurring in $F_1$ are indexed by nodes in $\omega^{<n}$.  There is some finite set $\Delta \subset \Ll$ such that all formulas occurring in $F_1$ are from $\Delta$.   Let $(\mu^i(x_0,\ldots,x_{m-1}) : i < N)$ enumerate the quantifier-free $L_1$-types of size-$m$ substructures of $\omega^{<n}$, where we may assume $\Delta$ is a set of $L$-formulas in $m$ variables.  Because expansions of $\mu^i$ to complete quantifier-free $L_s$-types may allow $P_k(x_i)$ and $P_k(x_j)$ for $i \neq j$, we do some coding. For any function $f: m \ar m$ and $(j_0, \ldots, j_{m-1}) =: \ov{\jmath} \in \omega^m$ define
\begin{equation}
\mu^i_{f, \ov{\jmath}}:= \mu^i \cup \{P_{j_{f(0)}}(x_0),\ldots,P_{j_{f(m-1)}}(x_{m-1})\}
\end{equation}
By $L_s$-indiscernibility, we know that for any increasing tuple $\ov{\jmath} \in \omega^m$ and $f: m \ar m$, if $\mu^i_{f, \ov{\jmath}}$ is realized in $I_s$, then there is a complete type $p$ in $\Mm$ such that for any $\ov{l} \vDash_{\I_s} \mu^i_{f, \ov{\jmath}}$, ~tp$(\ov{d}_{\ov{l}};\Mm)=p$.  Enumerate the $(\Delta,m)$-types in $\Mm$ as $(\delta_i : 1 \leq i < K)$ for some $K \in \omega$, and fix $\delta_0 := \emptyset$.  Let $N' := N \cdot m^m$.  Fix an enumeration $( (f_\beta, \mu_\beta) : \beta < N')$ of functions $f: m \ar m$ and types $\mu = \mu^i$, for $i<N$.  Let
\begin{equation}
f: [\aleph_0]^m \rightarrow K^{N'} \nonumber
\end{equation}
\nin map an $m$-tuple $\ov{\jmath} \mapsto \alpha$, for $\alpha < K^{N'}$ if 
\begin{enumerate}
\item $(s_\beta)_{\beta < N'}$ is the $\alpha$-th sequence from $K^{N'}$, and
\item for all $\beta < N'$, if there exists $\ov{l}$ from $I_s$ satisfying $\mu^\beta_{f_\beta,\ov{\jmath}}$, then $\textrm{tp}^\Delta(\ov{d}_{\ov{l}};\Mm) = \delta_{s_\beta}$; otherwise, $s_\beta = 0$.
\end{enumerate}
By Ramsey's theorem, there is an infinite subset of $\aleph_0$ that is homogeneous for this coloring.  The $L_1$-subtree of $I_1$ obtained by restricting to the levels in this infinite set indexes a subset of $\Tn= (d_i : i < \W)$, a finite subset of which will satisfy $F_1$.  
\end{proof}

By \ref{992}., EMtp$_{L_1}(\Un)$ $\supseteq$ EMtp$_{L_1}(\Tn)$. Thus,

\begin{equation}\label{e2}
\textrm{EMtp}_{L_0}(\Un) \supseteq_{\textrm{by \ref{988}.}} \textrm{EMtp}_{L_0}(\Tn) \supseteq_{\textrm{by Eq.~(\ref{e6})}} \textrm{EMtp}_{L_0}(\In)
\end{equation}

\textbf{step 3.} If we show that \textbf{Ind}($I_0$,$L$) is finitely satisfiable in $\Un$, then by \ref{984}., there is an $I_0$-indexed indiscernible $\Jn:=(b_i : i \in \W)$ locally based on the $e_i$.  By Eqn.~(\ref{e2}), and \ref{992}., the $e_i$ are $L_0$-locally based on the $a_i$, so by Obs.~\ref{38}, we are done.  It remains to show the following.

\begin{claim}
\textbf{Ind}($I_0$,$L$) is finitely satisfiable in $\Un$. 
\end{claim}

\begin{proof}
A finite subset $F_0 \subset$ \textbf{Ind}($I_0$,$L$) contains only variables indexed by nodes in $\omega^{\leq n}$ for some $n$.  To satisfy $F_0$ in $\Un$, it suffices to show that the type of an $L_0$-generalized indiscernible $k$-branching tree of height $n$ is satisfiable in $\Un$.

We follow \cite{dzsh04} to show that there is an $L_0$-embedding of $\sigma: k^{\leq n} \ar \omega^{<\omega}$ such that for all $i \lx j$, we have $\sigma(i) <_{\textrm{len}} \sigma(j)$.  We define $l_m < \omega, h_m : k^{\leq m} \ar \omega^{<\omega}$ by induction on $m$:
\begin{eqnarray}
h_i(\la \ra) = \la \ra, \textrm{~for all~} i<\omega \\\nonumber
l_m = \textrm{max}\{ \ell(h_m(\eta))+1 : \eta \in k^{\leq m} \} \\\nonumber
h_{m+1}(\la t \ra^{\smallfrown}\nu) = \la t \ra^{\smallfrown}{\underbrace{\la 0, \ldots 0 \ra}_{(t+1)\cdot l_m}}^{\smallfrown}h_m(\nu) 
\end{eqnarray}
Define $\sigma := h_n$.  The range of $k^{\leq n}$ under $\sigma$ is an $L_1$-subtree $W \subset I_1$, sometimes called a ``skew subtree.''  $\Un$ is already $L_1$-generalized indiscernible.  Since  the $L_0$-type of a tuple in $W$ determines its $L_1$-type in $I_1$, $(e_{\sigma(i)} : i \in k^{\leq n})$ is $L_0$-generalized indiscernible.
\end{proof}

\end{proof}

\bibliographystyle{plainnat}
\bibliography{preref}

\begin{thebibliography}{23}
\providecommand{\natexlab}[1]{#1}
\providecommand{\url}[1]{\texttt{#1}}
\expandafter\ifx\csname urlstyle\endcsname\relax
  \providecommand{\doi}[1]{doi: #1}\else
  \providecommand{\doi}{doi: \begingroup \urlstyle{rm}\Url}\fi

\bibitem[Baldwin(2009)]{ba09}
J.~Baldwin.
\newblock \emph{Categoricity}, volume~50 of \emph{University Lecture Series}.
\newblock American Mathematical Society, Providence, RI, 2009.

\bibitem[Baldwin and Shelah(2012)]{bash12}
J.~Baldwin and S.~Shelah.
\newblock The stability spectrum for classes of atomic models.
\newblock \emph{Journal of Mathematical Logic}, 12\penalty0 (1), 2012.

\bibitem[D{\v{z}}amonja and Shelah(2004)]{dzsh04}
M.~D{\v{z}}amonja and S.~Shelah.
\newblock On $\vartriangleleft^*$-maximality.
\newblock \emph{Annals of Pure and Applied Logic}, 125\penalty0 (1-3):\penalty0
  119--158, 2004.

\bibitem[Fouch{\'e}(1999)]{fou99}
W.~L. Fouch{\'e}.
\newblock Symmetries and {R}amsey properties of trees.
\newblock \emph{Discrete Mathematics}, 197/198:\penalty0 325--330, 1999.
\newblock 16th British Combinatorial Conference (London, 1997).

\bibitem[Graham and Rothschild(1974)]{garo74}
R.~L. Graham and B.~L. Rothschild.
\newblock Some recent developments in {R}amsey theory.
\newblock pages 61--76. Math. Centre Tracts, No. 56, 1974.

\bibitem[Guingona(2012)]{gu12}
V.~Guingona.
\newblock On uniform definability of types over finite sets.
\newblock \emph{Journal of Symbolic Logic}, 77\penalty0 (2):\penalty0 499--514,
  2012.

\bibitem[Hodges(1993)]{ho93}
W.~Hodges.
\newblock \emph{Model Theory}.
\newblock Cambridge University Press, Cambridge, U.K., 1993.

\bibitem[Kechris et~al.(2005)Kechris, Pestov, and Todorcevic]{kpt05}
A.~S. Kechris, V.~G. Pestov, and S.~Todorcevic.
\newblock Fra{\"{\i}}ss{\'{e}} limits, {R}amsey theory, and topological
  dynamics of automorphism groups.
\newblock \emph{Geometric and Functional Analysis}, 15\penalty0 (1):\penalty0
  106--189, 2005.

\bibitem[Kim and Kim(2011)]{kim11}
B.~Kim and H.-J. Kim.
\newblock Notions around tree property 1.
\newblock \emph{Annals of Pure and Applied Logic}, 162\penalty0 (9):\penalty0
  698--709, 2011.

\bibitem[Kim et~al.(2011)Kim, Kim, and Scow]{kks11}
B.~Kim, H.-J. Kim, and L.~Scow.
\newblock Tree indiscernibilities, revisited, 2011.
\newblock {p}reprint.

\bibitem[Laskowski and Shelah(2003)]{lassh03}
M.~C. Laskowski and S.~Shelah.
\newblock Karp complexity and classes with the independence property.
\newblock \emph{Annals of Pure and Applied Logic}, 120:\penalty0 263--283,
  2003.

\bibitem[Leeb(1973)]{le73}
K.~Leeb.
\newblock Vorlesungen uber {P}ascaltheorie.
\newblock \emph{{L}ecture notes, {U}niversitat {E}rlangen}, 1973.

\bibitem[Marker(2002)]{ma02}
D.~Marker.
\newblock \emph{Model Theory: An Introduction}.
\newblock Springer-Verlag, New York, NY, 2002.

\bibitem[Ne{\v{s}}et{\v{r}}il(2005)]{ne05}
J.~Ne{\v{s}}et{\v{r}}il.
\newblock Homogeneous structures and ramsey classes.
\newblock \emph{Combinatorics, Probability and Computing}, 14:\penalty0
  171--189, 2005.

\bibitem[Nguyen Van~Th{\'e}(2010)]{nvt10}
L.~Nguyen Van~Th{\'e}.
\newblock Structural {R}amsey theory of metric spaces and topological dynamics
  of isometry groups.
\newblock \emph{Memoirs of the American Mathematical Society}, 206\penalty0
  (968):\penalty0 x+140, 2010.

\bibitem[Scow(2012)]{sc12}
Lynn Scow.
\newblock Characterization of {NIP} theories by ordered graph-indiscernibles.
\newblock \emph{Annals of Pure and Applied Logic}, 163\penalty0 (11):\penalty0
  1624--1641, 2012.

\bibitem[Shelah(1990)]{sh90}
S.~Shelah.
\newblock \emph{Classification Theory and the number of non-isomorphic models
  (revised edition)}.
\newblock North-Holland, Amsterdam-New York, 1990.

\bibitem[Simon(2012)]{si12}
P.~Simon.
\newblock Lecture notes on {N}{I}{P} theories, 2012.
\newblock {p}reprint.

\bibitem[Sokic(2013)]{so13}
M.~Sokic.
\newblock Semilattices and {R}amsey property, 2013.
\newblock {p}reprint.

\bibitem[Takeuchi and Tsuboi(2012)]{tats12}
K.~Takeuchi and A.~Tsuboi.
\newblock On the existence of indiscernible trees.
\newblock \emph{Annals of Pure and Applied Logic}, 163\penalty0 (12):\penalty0
  1891--1902, 2012.

\bibitem[Tent and Ziegler(2012)]{tezi11}
K.~Tent and M.~Ziegler.
\newblock \emph{A {C}ourse in {M}odel {T}heory}.
\newblock {A}{S}{L} {L}ecture {N}otes in {L}ogic. {C}ambridge {U}niversity
  {P}ress, Cambridge, U.K., 2012.

\bibitem[Todorcevic(2010)]{to10}
S.~Todorcevic.
\newblock \emph{Introduction to {R}amsey spaces}, volume 174 of \emph{Annals of
  Mathematics Studies}.
\newblock Princeton University Press, Princeton, NJ, 2010.

\bibitem[Ziegler(1988)]{zi88}
M.~Ziegler.
\newblock Stabilit\"{a}tstheorie.
\newblock
  \url{http://home.mathematik.uni-freiburg.de/ziegler/skripte/stabilit.pdf},
  1988.
\newblock {n}otes.

\end{thebibliography}

\end{document}